\UseAllTwocells \xyoption{frame} \CompileMatrices
\newtheorem{prop}{Proposition}[section]
\newtheorem{lem}[prop]{Lemma}
\newtheorem{thm}[prop]{Theorem}
\newtheorem{rmk}[prop]{Remark}
\newcommand{\noprint}[1]{}
\renewcommand{\tilde}{\widetilde}
\newcommand{\Ext}{\mbox{Ext}}
\newcommand{\MM}{{\mathfrak M}}
\newcommand{\zz}{{\mathbb Z}}
\newcommand{\qq}{{\mathbb Q}}
\newcommand{\pp}{{\mathbb P}}
\newcommand{\cc}{{\mathbb C}}
\newcommand{\sC}{{\mathcal C}}
\newcommand{\sE}{{\mathcal E}}
\newcommand{\sL}{{\mathcal L}}
\newcommand{\sS}{{\mathcal S}}
\newcommand{\sO}{{\mathcal O}}
\newcommand{\sM}{{\mathcal M}}
\newcommand{\bfQ}{\mathbf{Q}}
\newcommand{\bbQ}{\mathfrak{Q}}
\newcommand{\tL}{\mathscr{L}}
\newcommand{\tE}{\mathscr{E}}
\newcommand{\scrV}{\mathscr{V}}
\newcommand{\scrF}{\mathscr{F}}
\DeclareMathOperator{\Aut}{Aut}
\DeclareMathOperator{\SP}{Sp}
\DeclareMathOperator{\vir}{vir}
\DeclareMathOperator{\Pic}{Pic}
\DeclareMathOperator{\vd}{vd}
\DeclareMathOperator{\Ch}{Ch}
\DeclareMathOperator{\CR}{CR}
\DeclareMathOperator{\End}{End}
\DeclareMathOperator{\SL}{SL}
\DeclareMathOperator{\PGL}{PGL}
\DeclareMathOperator{\Prym}{Prym}
\DeclareMathOperator{\SU}{SU}
\DeclareMathOperator{\Higgs}{Higgs}
\DeclareMathOperator{\age}{age}
\DeclareMathOperator{\Sp}{Sp}
\DeclareMathOperator{\orb}{orb}
\DeclareMathOperator{\quantum}{quantum}
\DeclareMathOperator{\Td}{Td}
\DeclareMathOperator{\constant}{constant}
\newcommand{\on}{\mathbin{\mid}}
\newcommand{\rk}{\mathop{\rm rk}}
\newcommand{\ev}{\mathop{\rm ev}\nolimits}
\newcommand{\num}{\mathop{\rm num}\nolimits}
\numberwithin{equation}{subsection}
\newcommand {\mat}      [1] {\left(\begin{array}{#1}}
\newcommand {\rix}          {\end{array}\right)}
\title[Quantum cohomology of moduli space of $\PGL_2$-bundles on curves]{The quantum cohomology of moduli space of $\PGL_2$-bundles on curves}
\author{Sagnik Das}
\address{Department of Mathematics\\ University of Kansas\\ 405 Snow Hall 1460 Jayhawk Blvd\\Lawrence KS 66045 USA} 
\email{das.sagnik@ku.edu}
\author{Yunfeng Jiang}
\address{Department of Mathematics\\ University of Kansas\\ 405 Snow Hall 1460 Jayhawk Blvd\\Lawrence KS 66045 USA} 
\email{y.jiang@ku.edu}
\author{Hsian-Hua Tseng}
\address{Department of Mathematics\\ Ohio State University\\ 100 Math Tower, 231 West 18th Ave. \\ Columbus,  OH 43210\\ USA}
\email{hhtseng@math.ohio-state.edu}
\begin{document}
\sloppy \maketitle
\begin{abstract}
We calculate the quantum cohomology of the moduli space of stable $\PGL_2$-bundles over a smooth  curve of genus $g\ge 2$. 
\end{abstract}

\maketitle

\tableofcontents

\section{Introduction}

We work over  $\cc$ throughout of the paper. Let $X$ be a smooth algebraic curve of genus $g\ge 2$. The main goal of this paper is to calculate the quantum cohomology of the moduli space of stable $\PGL_2$-bundles over  $X$.

\subsection{Background}

Let $M_{n,L}(X)$ be the moduli space of semi-stable vector bundles $E$ with rank $n$ and determinant $\det(E)=L\in\Pic(X)$. 
Let $d=\deg(L)$ be the degree of $L$. When $\gcd(n,d)=1$, semi-stability of $E$ is equivalent to stability, and $M_{n,L}(X)$ is a smooth projective variety of dimension $(n^2-1)(g-1)$.  
For simplicity we assume\footnote{For $\deg(L)=d$ odd, $M_{n,L}(X)$ is isomorphic to $M_{n,L'}(X)$ where $\deg(L')=1$, by tensoring.} that $d=\deg(L)=1$. 

The Langlands dual group of $\SL_n(\cc)$ is $\PGL_n(\cc)$. Let  $M^{\PGL_n}_{n,d}(X)$ be the moduli space of stable $\PGL_n$-bundles $E$ with $\deg(E)=d$.  We let $1\le d\le n-1$.  Let $$\Gamma:=\Pic^0(X)[n]$$ be the  subgroup of $n$-torsions of the degree zero Picard group of $X$. Then $$M^{\PGL_n}_{n,d}(X)=[M_{n,L}(X) /\Gamma]$$ is a smooth global quotient Deligne-Mumford stack, where the action of $\Gamma$ is given by tensor product. 

We can generalize the moduli spaces above to the case of Higgs bundles. 
Let $\sM^{\Higgs}_{n,L}(X)$ be the moduli space of stable $\SL_n$-Higgs bundles $(E,\varphi)$ for $\varphi: E\to E\otimes K_{X}$ over an algebraic curve $X$ with rank $n$, determinant $L\in\Pic(X)$. 
This moduli space $\sM^{\Higgs}_{n,L}(X)$ is a smooth quasi-projective variety.  It is a hyperk\"ahler variety which admits three complex structures. 
Similar to the above case of stable bundles on curves, 
the moduli space $\sM^{\Higgs}_{n,d}(\PGL_n)$ of $\PGL_n$-Higgs bundles of rank $n$ and degree $d$ is the smooth quotient Deligne-Mumford stack 
$[\sM^{\Higgs}_{n,L}(X)/\Gamma]$. 

Mirror symmetry in mathematics,  motivated by theoretical physics, is a very important and interesting research  direction in the last several decades.  Its influence 
goes over many research topics including Gromov-Witten theory, Donaldson-Thomas theory, and even recently the Vafa-Witten theory and S-duality, see \cite{TT1}, \cite{Jiang_Adv}, \cite{JK}. 
From \cite{HT},  the moduli spaces $\sM^{\Higgs}_{n,L}(X)$ and $\sM^{\Higgs}_{n,d}(\PGL_n)$ provided the examples 
of  nontrivial mirror pair manifolds.  
Hausel-Thaddeus \cite{HT} conjectured that the stringy Hodge numbers of $\sM^{\Higgs}_{n,L}(X)$ and $\sM^{\Higgs}_{n,d}(\PGL_n)$ are the same. 
The conjecture was first proven recently by 
Groechenig-Wyss-Ziegler \cite{GWZ} using the techniques of $p$-adic integration.  In the paper \cite{MS}, Maulik-Shen gave a new proof for the conjecture  using Ng\^{o}'s support theorem  and endoscopic decomposition in \cite{Ngo_IHES} for the morphism of Hitchin fibrations. 

We are interested in the enumerative geometry for the spaces $\sM^{\Higgs}_{n,L}(X)$ and $\sM^{\Higgs}_{n,d}(\PGL_n)$, see \cite{Nesterov} for a recent progress of the genus one quasi-map invariants and Vafa-Witten invariants.  
The topology and geometry are already very interesting in the case of moduli space of stable bundles on curves.   In this paper we focus on the case of the moduli space of stable bundles in rank $2$.  
The quantum cohomology of the moduli space  $M_{2,L}(X)$ of stable rank $2$ bundles on $X$ was studied in \cite{Donaldson}, and \cite{Munoz}.  We complete the duality and calculate the quantum cohomology of the moduli space  $M^{\PGL_2}_{2,d}(X)$. 
For the moduli space $M^{\PGL_2}_{2,d}(X)=[M_{2,L}(X)/\Gamma]$,   Harder-Narasimhan \cite{Harder-Narasimhan} proved that the cohomology $H^*(M)$ is isomorphic to the cohomology of $H^*(M/\Gamma)$. 
The orbifold cohomology and quantum orbifold cohomology of $M^{\PGL_2}_{2,d}(X)$ are not isomorphic to the cohomology and quantum cohomology of $M=M_{2,L}(X)$ anymore, due to the nontrivial  contribution of twisted sectors of $[M/\Gamma]$. The authors believe 
the quantum orbifold cohomology of $M^{\PGL_2}_{2,d}(X)$ already contains information of the quantum cohomology of the moduli space $\sM^{\Higgs}_{2,L}(X)$ of Higgs bundles. 
In a sequel paper we will consider the quantum orbifold cohomology of the moduli space  $\sM^{\Higgs}_{2,d}(\PGL_2)$ rank $2$ Higgs bundles on $X$ and the possible mirror symmetry properties. 

\subsection{Result} 

The moduli stack $M^{\PGL_2}_{2,d}(X)$ is the Deligne-Mumford stack $[M_{2,L}(X)/\Gamma]$. Here in the rank $2$ case, $\Gamma=\text{Pic}^0(X)[2]\simeq (\mathbb{Z}/2\mathbb{Z})^{2g}$.  Let $\kappa\in \Gamma$ be an element. Then the 
$\kappa$-fixed locus $(M_{2,L}(X))^{\kappa}$ is the Prym variety 
$\Prym(X^\prime/X)$, where $X^\prime\to X$ is an \'etale double cover of $X$.  
The Prym variety $\Prym(X^\prime/X)$ is a smooth abelian variety of dimension $g-1$. 
More details can be found in \S \ref{sec_PGL2}.
The inertia stack $IM^{\PGL_2}_{2,d}(X)$ has the following decomposition 
$$IM^{\PGL_2}_{2,d}(X)=M^{\PGL_2}_{2,d}(X)\sqcup \bigsqcup_{0\neq \kappa\in \Gamma}[(M_{2,L}(X))^{\kappa}/\Gamma].$$
Thus, for each nontrivial  $\kappa\in \Gamma$, we have the stack $[(M_{2,L}(X))^{\kappa}/\Gamma]\cong [\Prym(X^\prime/X)/\Gamma]$.  
We let $\age(\kappa)$  be the age (or the degree shifting number) of the component (twisted sector) $[(M_{2,L}(X))^{\kappa}/\Gamma]$. 
The Chen-Ruan orbifold cohomology of $M^{\PGL_2}_{2,d}(X)$ is additively given by 
\begin{equation}\label{eqn_Chen-Ruan}
H^*_{\CR}(M^{\PGL_2}_{2,d}(X))=H^*(M^{\PGL_2}_{2,d}(X)) \oplus\bigoplus_{0\neq \kappa\in \Gamma}
H^{*-2\age(\kappa)}[(M_{2,L}(X))^{\kappa}/\Gamma].
\end{equation}
The Chen-Ruan orbifold cup product of $M^{\PGL_2}_{2,d}(X)$ was calculated in \cite{BP}.  

The quantum orbifold cohomology of $M^{\PGL_2}_{2,d}(X)\simeq [M_{2,L}(X)/\Gamma]$ is defined by the $3$-pointed genus zero Gromov-Witten invariants of $[M_{2,L}(X)/\Gamma]$.
To simplify notation, let $$M:=M_{2,L}(X)$$ and  $M^{\PGL_2}:=M^{\PGL_2}_{2,d}(X)=[M/\Gamma]$. 
Let $$\overline{M}_{0, 3}([M/\Gamma], A)$$ be the moduli space of stable maps from $3$-marked, genus zero twisted curves to 
$[M/\Gamma]$ of degree $A\in H_2([M/\Gamma])$. For $\alpha_1, \alpha_2, \alpha_3\in H^*_{\CR}(M^{\PGL_2}_{2,d}(X))$, the orbifold Gromov-Witten invariant is defined as
$$
\langle \alpha_1, \alpha_2, \alpha_3\rangle_{0,3,A}^{[M/\Gamma]}:=
\int_{[\overline{M}_{0, 3}([M/\Gamma], A)]^{\vir}}\prod_{i=1}^{3}ev_i^*(\alpha_i)
$$
where 
$$
ev_i: \overline{M}_{0, 3}([M/\Gamma], A)\to I[M/\Gamma]
$$
are the evaluation maps  sending 
$$(f: (\sC, \Sigma_i)\to [M/\Gamma])\mapsto (f(p_i), \sigma_i)\in I[M/\Gamma].$$
Here $(\sC, \Sigma_i)$ is the marked twisted genus zero curve, $\Sigma_i (i=1,2,3)$ are the marked $\mu_{r_i}$-gerbes with underlying geometric points $p_i$ on $\sC$ and 
$\sigma_i$ are the images of the injective morphisms $\sigma_i:  \Sigma_i\to \Aut(f(p_i))$.  
For classes $\alpha_1, \alpha_2\in H^*_{\CR}([M/\Gamma])$, let $\alpha_1\cdot_{\bbQ} \alpha_2$ be the small quantum product,  which  is defined by orbifold Popincar\'e pairing $\langle -,- \rangle$,
\begin{equation}\label{eqn_quantum_product}
\langle\alpha_1\cdot_{\bbQ}\alpha_2, \alpha_3\rangle=\sum_{A}\langle \alpha_1, \alpha_2, \alpha_3\rangle_{0,3,A}^{[M/\Gamma]}\bbQ^{A}
\end{equation}
for any $\alpha_3\in H^*_{\CR}([M/\Gamma])$, where $\bbQ$ is the quantum parameter.

We recall the cohomology $H^*(M)$ for the moduli space $M=M_{2,L}(X)$ of stable bundles. The cohomology ring of the moduli space $M_{n,L}(X)$ for arbitrary rank $n$ was calculated in \cite{Kirwan}, \cite{Earl-Kirwan}.
We recall the ring structure described in \cite{King-Newstead} in the rank $2$ case.  Let $\sE\to X\times M$ be the universal rank $2$ vector bundle. Using Kunneth decomposition, we can write the second Chern class of  the endomorphism of $\sE$ as
\begin{equation}\label{eqn_second_Chern}
c_2(\End_0(\sE))=2[X]\otimes \alpha+4\psi-\beta.
\end{equation}
We explain the notations.  
Let $\{\xi_1, \cdots, \xi_{2g}\}\in H^1(X,\zz)$ be a basis and $\{\xi^*_1, \cdots, \xi^*_{2g}\}$ be the dual basis in $H_1(X,\zz)$.
Then $\xi^*_i\xi^*_{i+g}=[X]\in H_2(X,\zz)$ for $1\le i\le g$. 
We have  $\psi=\sum_{i=1}^{2g}\xi_i\otimes \psi_i$.  The classes $\alpha, \beta, \psi_i$ are given as follows.  
Let 
$$\mu: H_*(X)\to H^{4-*}(M)$$
be the map given by 
$\mu(a)=-\frac{1}{4} p_1(\mathfrak{g}_{\sE})/a$. Here $\mathfrak{g}_{\sE}\to X\times M$ is the associated universal $SO(3)$-bundle and 
$p_1(\mathfrak{g}_{\sE})\in H^4(X\times M)$ is the first Pontrjagin class. 
Then we have the classes 
$$
\begin{cases}
\alpha=2\mu([X])\in H^2(M), \\
\psi_i=\mu(\xi_i^*)\in H^3(M), & 1\le i\le 2g,\\
\beta=-4\mu(x)\in H^4(M),& x\in H_0(X) \text{~the point class.}
\end{cases}
$$
From \cite{King-Newstead}, these tautological classes $\alpha, \beta, \psi$ generate the cohomology ring 
$H^*(M)$ with relations  defined by recursions. 
Let $\gamma:=-2\sum_{i=1}^{g}\psi_i \psi_{i+g}$. 
Set $q_0^1=1, q_0^2=0, q_0^3=0$, for $r\ge 1$, define 
\begin{equation}\label{eqn_cohomology_relations}
\begin{cases}
q^1_{r+1}=\alpha q_r^1+ r^2 q_r^2, \\
q_{r+1}^2=\beta q_r^1+\frac{2r}{r+1}q_r^3,\\
q_{r+1}^3=\gamma q_r^1.
\end{cases}
\end{equation}
Let $I_g=(q_g^1, q_g^2, q_g^3)\subset \qq[\alpha,\beta,\gamma]$ for all $g\ge 1$. Then we have 
$$H^*(M)=\bigoplus_{k=0}^{g-1}\wedge_0^k H^3(M)\otimes \qq[\alpha, \beta, \gamma]/I_{g-k}.$$

Let $\Sp(2g,\zz)$ be the symplectic group which acts on $H^*(M)$ (actually acting on $\psi_i$). The invariant part of the cohomology ring, $H^*_I(M)$, is generated by $\alpha, \beta, \gamma$ and has the representation
$$H^*_I(M)= \qq[\alpha, \beta, \gamma]/I_{g}.$$

Mu\~noz \cite{Munoz} calculated the quantum cohomology ring of $M=M_{2,L}(X)$.  Let $\bbQ$ be the quantum parameter. First we have the following deformation of the relations in (\ref{eqn_cohomology_relations}).
Set $Q_0^1=1, Q_0^2=0, Q_0^3=0$, for $r\ge 1$, define 
\begin{equation}\label{eqn_Quan_cohomology_relationsAA}
\begin{cases}
Q^1_{2}=\alpha^2+ \beta+(-1)^g8\cdot \bbQ, \\
Q_{2}^2=\alpha(\beta+(-1)^{g}8\bbQ)+\gamma,\\
Q_{2}^3=\alpha\gamma+\alpha^2\cdot\bbQ.
\end{cases}
\end{equation}
and for $r\ge 3$, 
\begin{equation}\label{eqn_Quan_cohomology_relations}
\begin{cases}
Q^1_{r+1}=\alpha Q_r^1+ r^2 Q_r^2, \\
Q_{r+1}^2=(\beta+(-1)^{r+g-1}8) Q_r^1+\frac{2r}{r+1}Q_r^3,\\
Q_{r+1}^3=\gamma Q_r^1.
\end{cases}
\end{equation}
Then we have 
\begin{thm}\label{thm_quan_coh_M}(\cite[Theorem 20]{Munoz})
Assume $g(X)\ge 3$. The quantum cohomology of $M=M_{2,L}(X)$ has the following representation 
$$QH^*(M)=\bigoplus_{k=0}^{g-1}\wedge_0^k H^3(M)\otimes \qq[\alpha, \beta, \gamma,\bbQ]/J_{g-k},$$
where $J_r=(Q_r^1, Q_r^2, Q_r^3)$ for $r\ge 1$. 
\end{thm}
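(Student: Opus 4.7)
The plan is to establish the quantum cohomology presentation as a deformation of the classical King--Newstead presentation of $H^*(M)$, using that $M$ is Fano with anticanonical class proportional to $\alpha$ to control the Gromov--Witten contributions, and using $\Sp(2g,\zz)$-equivariance of the quantum product to reduce the problem to products of the invariant generators. Concretely, the quantum product is $\Sp(2g,\zz)$-equivariant, so $QH^*(M)$ decomposes compatibly with the $\Sp(2g,\zz)$-isotypic decomposition of $H^*(M)$; once one has a presentation of the invariant piece generated by $\alpha,\beta,\gamma$ with the deformed relations, the presentation on each wedge component $\wedge_0^k H^3(M)\otimes \qq[\alpha,\beta,\gamma,\bbQ]/J_{g-k}$ follows from equivariance.

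The heart of the proof is the computation of the base-case relations (\ref{eqn_Quan_cohomology_relationsAA}): these express the lowest-order quantum corrections to the classical relations $q_2^i$ in (\ref{eqn_cohomology_relations}). One identifies the relevant moduli spaces of low-degree stable maps $\pp^1 \to M$, typically via auxiliary moduli spaces of rank-$2$ bundles or stable pairs over $X\times \pp^1$ (e.g.\ through Thaddeus' flip construction), and evaluates the three-point Gromov--Witten correlators by intersection theory on these auxiliary moduli. The key output is the quantum correction $(-1)^g\cdot 8\bbQ$ in $Q_2^1$: the numerical factor $8$ comes from an intersection number on a Prym-type fiber, while the sign $(-1)^g$ reflects a determinantal orientation that alternates with the parity of the genus.

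With the base case in place, the higher relations $Q_r^i$ for $r\geq 3$ follow by propagating the classical recursive structure in (\ref{eqn_cohomology_relations}) and showing that the additional quantum terms at each successive step are precisely the $(-1)^{r+g-1}\cdot 8$ corrections appearing in (\ref{eqn_Quan_cohomology_relations}); Fano-index and cohomological-degree constraints bound which curve classes contribute at each level and pin down the parity-dependent sign. Finally, one invokes flatness of quantum cohomology as a deformation of $H^*(M)$ over $\qq[\bbQ]$: the Hilbert function of the quotient by the proposed ideal agrees with that of the classical cohomology $H^*(M)/I_g$, so no further relations can appear.

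The principal obstacle is the base-case computation: pinning down both the numerical coefficient $8$ and, more subtly, the sign $(-1)^g$ in $Q_2^1$; these cannot be recovered from the classical limit and require a careful intersection-theoretic analysis on an auxiliary moduli space of low-degree maps to $M$. The hypothesis $g\geq 3$ enters to ensure the recursion produces the stated presentation without low-genus degeneracies, the cases $g=1,2$ (where the recursion collapses and the structure of the moduli of lines changes) requiring separate handling.
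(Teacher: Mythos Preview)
This theorem is not proved in the paper: it is quoted from \cite[Theorem 20]{Munoz} as background, so there is no in-paper proof to compare your proposal against directly. What the paper does contain is a summary of Mu\~noz's method (Proposition~\ref{prop_stable_bundle_M}, Proposition~\ref{prop_GW_M}, and the surrounding discussion in \S\ref{subsec_GW_invariants}), which differs from your outline in its computational core.

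In Mu\~noz's argument as recounted here, the key step is not a Thaddeus-type flip but rather the identification of the moduli space of degree-one stable maps $\pp^1\to M$ with a moduli space $\MM$ of $H$-stable rank-$2$ bundles on the ruled surface $\pp^1\times X$, obtained via Qin's wall-crossing \cite{Qin}. This $\MM$ is a $\pp^{2g-2}$-bundle over the Jacobian $J$, and the three-point Gromov--Witten invariants of $M$ become Donaldson invariants of $\pp^1\times X$ up to the sign $\epsilon_{\sS}(c_1)=(-1)^{g-1}$. These Donaldson invariants then reduce to intersection numbers on $J$ through the projective-bundle relation $[X]^{2g-1+i}=\frac{(-8)^i}{i!}\omega^i$; it is this identity that simultaneously produces the factor $8$ and the alternating sign in the recursion~(\ref{eqn_Quan_cohomology_relations}), not an ``intersection number on a Prym-type fiber'' as you suggest (Prym varieties play no role in the $\SL_2$ calculation).

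Your high-level ingredients---$\Sp(2g,\zz)$-equivariance to reduce to the invariant subring, a flatness/Hilbert-function argument to rule out extra relations, and a recursion driven by the degree-one contribution---are all present in Mu\~noz's actual proof. But the specific mechanism you describe for the base-case coefficients is not the one used, and your sketch stops short of the concrete identification (stable maps $\leftrightarrow$ stable bundles on $\pp^1\times X$ $\leftrightarrow$ projective bundle over $J$) that makes the computation go through.
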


Let us consider the moduli stack 
$M^{\PGL_2}=[M/\Gamma]$.
For any nontrivial $\kappa\in \Gamma$, the cohomology $H^s(\Prym(X^\prime/X)^{\Gamma})=0$ if $s$ is odd, and when $s\le 2(g-1)$ is even,  $H^s(\Prym(X^\prime/X)^{\Gamma})$ is generated by $r_{\kappa}^s:=\tiny\mat{c}2(g-1)\\s\rix$ generators.
We use $1_{\kappa}$ to denote the generator of $H^0(\Prym(X^\prime/X)^{\Gamma})$.
Consider 
$$R_{\kappa}:=\sum_{\substack{s=0\\
s \text{~even}}}^{2(g-1)}r_\kappa^s=2^{2g-3}.$$ 
We let 
$$1_{\kappa}, 1_{\kappa} h, \cdots, 1_{\kappa}h_{r_{\kappa}^2}, 1_{\kappa}h_{r_{\kappa}^2+1}, \cdots, 1_{\kappa}h_{r_{\kappa}^2+r_{\kappa}^4}, \cdots, 1_{\kappa}h_{R_{\kappa}}$$
be the generators of the cohomology $H^{*}(\Prym(X^\prime/X)^{\Gamma})$.
We let $H^*_{I, \CR}([M/\Gamma])$ be the $\SP(2g,\zz)$-invariant Chen-Ruan cohomology. Then we have 
\begin{equation}\label{eqn_Chen-Ruan_I}
H^*_{I, \CR}([M/\Gamma])\cong \left(\qq[\alpha,\beta,\gamma]\oplus\bigoplus_{0\neq {\kappa}\in\Gamma}\qq[1_{\kappa}, 1_{\kappa} h, \cdots, 1_{\kappa}h_{R_{\kappa}}]\right)/(I_g, I_{\orb})
\end{equation}
where $I_{\orb}$ is the relations coming from the Chen-Ruan cup products, see \S \ref{subsec_Chen-Ruan}.
 
 We introduce the new recursive relations:
Set $\bfQ_0^1=1, \bfQ_0^2=0, \bfQ_0^3=0$; $\bfQ_1^1=\alpha, \bfQ_1^2=\beta+(-1)^g8\bbQ, \bfQ_1^3=\gamma$, 
\begin{equation}\label{eqn_Quan_cohomology_relations-2-g>4}
\begin{cases}
\bfQ^1_{2}=\alpha^2+ \beta+(-1)^g8\cdot \bbQ, \\
\bfQ_{2}^2=\alpha(\beta+(-1)^{g}8\bbQ)+\gamma,\\
\bfQ_{2}^3=\alpha\gamma+\alpha^2\cdot\bbQ.
\end{cases}
\end{equation}
and for $r\ge 3$, define 
\begin{equation}\label{eqn_Quan_cohomology_relations}
\begin{cases}
\bfQ^1_{r+1}=\alpha \bfQ_r^1+ r^2 \bfQ_r^2, \\
\bfQ_{r+1}^2=(\beta+(-1)^{r+g-1} \bfQ_r^1+\frac{2r}{r+1}\bfQ_r^3,\\
\bfQ_{r+1}^3=\gamma \bfQ_r^1.
\end{cases}
\end{equation}

The quantum products of $\alpha$ with the cohomology classes of the twisted sectors in $I[M/\Gamma]$ are given as follows. For any $0\neq {\kappa}\in \Gamma$ so that $[M^{\kappa}/\Gamma]=[\Prym(X^\prime/X)/\Gamma]$, and  for an even integer $s$ between $[0, 2(g-1)]$, $H^s(\Prym(X^\prime/X)^{\Gamma})$ is generated by $r_{\kappa}^s:=\tiny\mat{c}2(g-1)\\s\rix$ generators.
We use the notation $h^s$ to represent any of the $r_\kappa^s$ generators in $H^s(\Prym(X^\prime/X)^{\Gamma})$. Therefore, we count $h^2, h^4, \cdots, h^{2(g-2)}, h^{2(g-1)}$ for the representatives in the cohomology 
$H^2(\Prym(X^\prime/X)^{\Gamma}), H^4(\Prym(X^\prime/X)^{\Gamma}), \cdots, H^{2(g-1)}(\Prym(X^\prime/X)^{\Gamma})$ respectively. 

Then we have 
\begin{equation}\label{eqn_Quan_cohomology_product_twisted-sector}
I_{\quantum}:=
\begin{cases}
\alpha\cdot_{\bbQ}1_{\kappa}=1_{\kappa}\cdot \alpha+1_{\kappa}\cdot \bbQ, \\
\alpha\cdot_{\bbQ}1_{\kappa}h^2=1_{\kappa}\cdot \alpha\cdot h^2+1_{\kappa}h^2\cdot \bbQ, \\
\vdots \quad \vdots\quad \vdots\quad\quad \quad \vdots\quad \vdots\quad \vdots\quad\quad \quad \vdots\quad \vdots\quad \vdots\\
\alpha\cdot_{\bbQ}1_{\kappa}h^{s}=1_{\kappa}\cdot \alpha\cdot h^{s}+
1_{\kappa}\cdot h^{s}\cdot\bbQ\\
\vdots \quad \vdots\quad \vdots\quad\quad \quad \vdots\quad \vdots\quad \vdots\quad\quad \quad \vdots\quad \vdots\quad \vdots\\
\alpha\cdot_{\bbQ}1_{\kappa}h^{2(g-2)}=1_{\kappa}\cdot \alpha\cdot h^{2(g-2)}+
1_{\kappa}\cdot h^{2(g-2)}\cdot\bbQ\\
\alpha\cdot_{\bbQ}1_{\kappa}h^{2(g-1)}=
1_{\kappa}\cdot h^{2(g-1)}\cdot\bbQ.
\end{cases}
\end{equation}
Note that the above quantum products preserve degrees.  We recall that\footnote{Here $s$ is even.} 
$$\deg(\alpha)=1; \quad \deg(1_{\kappa})=(g-1); \quad \deg(1_{\kappa}h_s)=(g-1)+\frac{1}{2}s.$$
The quantum parameter $\bbQ$ has algebraic degree $2$ (the degree of the canonical line bundle $K_{[M/
\Gamma]}$).
When there is nontrivial twisted curves as in the above quantum product, there is a factor $\frac{1}{2}$ coming from the local monodromy group $\mu_2$ and we find that $\bbQ$ has algebraic degree $1$.

Let $I^{\bbQ}_{\orb}=(\bfQ_g^1, \bfQ_g^2, \bfQ_g^3, I_{\quantum})$ be the relations defined in (\ref{eqn_Quan_cohomology_relations})
and (\ref{eqn_Quan_cohomology_product_twisted-sector}). 
The following is our  main result:
\begin{thm}\label{thm_quantum_M-g>3}
Assume the genus of $X$ is $g\ge 2$. 
We have the representation of the quantum orbifold cohomology 
$$QH^*_{I, \CR}([M/\Gamma])\cong \left(\qq[\alpha,\beta,\gamma,\bbQ]\oplus\bigoplus_{0\neq \kappa\in\Gamma}\qq[1_{\kappa}, 1_{\kappa} h, \cdots, 1_{\kappa}h_{R_{\kappa}},\bbQ]\right)/(I^{\bbQ}_{\orb}).$$
\end{thm}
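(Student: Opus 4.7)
The strategy is to show that the classical Chen--Ruan presentation in (\ref{eqn_Chen-Ruan_I}) admits a quantum deformation whose new relations are precisely $(\bfQ_g^1,\bfQ_g^2,\bfQ_g^3)$ (a deformation of $I_g$ of Munoz type) together with $I_{\quantum}$ (a deformation of those parts of $I_{\orb}$ that involve a twisted generator multiplied by $\alpha$). The plan is to reduce the whole ring structure to quantum multiplication by the degree--two divisor class $\alpha$, and then carry out these quantum multiplications separately on the untwisted and twisted pieces of the Chen--Ruan cohomology of $M^{\PGL_2}$.

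First I would handle the untwisted sector. Since the projection $M\to[M/\Gamma]$ is \'etale Galois with group $\Gamma$, a curve class $A\in H_2([M/\Gamma])$ lifts uniquely (once a base point is fixed) to a class on $M$, and every representable stable map from a smooth genus zero curve factors through $M$. Combined with the Harder--Narasimhan identification $H^*(M)\cong H^*(M)^{\Gamma}$, this forces the restriction of the quantum product on $H^*_{I,\CR}([M/\Gamma])$ to the untwisted $\Sp(2g,\zz)$--invariant piece to coincide with Munoz's invariant quantum ring of $M$, and therefore to satisfy the relations $(\bfQ_g^1,\bfQ_g^2,\bfQ_g^3)$ of Theorem \ref{thm_quan_coh_M}. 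One has to check, using degree/dimension comparison with the virtual dimension $(3g-3)+\int_A c_1(K_{[M/\Gamma]}^{-1})$, that no invariants with nontrivial evaluations into twisted sectors can contribute when the three input classes are untwisted and $\Sp(2g,\zz)$--invariant: the age shifts force at least two inputs to be twisted in any such contribution, which rules them out.

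Next I would compute the quantum products appearing in (\ref{eqn_Quan_cohomology_product_twisted-sector}). For each $0\ne\kappa\in\Gamma$, the twisted sector is $[\Prym(X'/X)/\Gamma]$, a smooth abelian DM stack of dimension $g-1$. I would prove $\alpha\cdot_{\bbQ}(1_{\kappa}h^{s}) = \alpha\cdot(1_{\kappa}h^{s})+(1_{\kappa}h^{s})\cdot\bbQ$ for $s<2(g-1)$ and the Poincar\'e--dual top case by identifying the contributing moduli of $3$--pointed twisted stable maps. The degree constraints, together with the fact that the two twisted evaluations $ev_1,ev_3$ must land in the same component (the monodromy condition forces $\kappa_1+\kappa_3=0$), reduce the relevant moduli to the untwisted-to-twisted transition studied in the orbifold analogue of the Munoz recursion. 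The Chen--Ruan pairing then pins down the coefficient of $\bbQ^1$, and the factor $\tfrac12$ from the local $\mu_2$--monodromy fixes the normalization of $\bbQ$ in these relations (as discussed after (\ref{eqn_Quan_cohomology_product_twisted-sector})). Once $\alpha\cdot_{\bbQ}$ is known on every additive generator, associativity and the divisor equation determine all other quantum products from the Chen--Ruan ring structure.

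The main obstacle I expect is the twisted sector Gromov--Witten computation: one needs to argue that only the degree $0$ and degree $1$ (i.e.\ coefficients of $\bbQ^0$ and $\bbQ^1$) invariants contribute to $\alpha\cdot_{\bbQ}(1_\kappa h^s)$, and that higher--degree contributions vanish by dimensional/age considerations analogous to Munoz's vanishing of the $\bbQ^{\ge2}$ coefficients in $QH^*(M)$. Once that vanishing is in place, and the leading quantum corrections are identified as above, the ring presentation $(I^{\bbQ}_{\orb})$ is forced, because classically it is known that $(I_g,I_{\orb})$ is a complete set of relations and our quantum corrections are precisely the unique $\bbQ$--flat deformations compatible with the computed three--point invariants.
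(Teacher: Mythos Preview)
Your outline matches the paper's high-level plan: reduce to quantum multiplication by $\alpha$, treat the untwisted sector via Mu\~noz, and compute $\alpha\cdot_{\bbQ}(1_\kappa h^s)$ separately. But two steps in your proposal are genuine gaps relative to what the paper actually does.

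\textbf{Ruling out one twisted insertion.} You propose to exclude the case of exactly one stacky evaluation by a virtual-dimension/age count. That is not how the paper proceeds, and it is not clear such a count would succeed. The paper instead proves (Lemma~\ref{lem_one_stacky_point}) that the moduli $\overline{M}_{0,3}([M/\Gamma],1)^{0,\kappa,0}$ is \emph{empty}: any domain component with exactly one stacky point is isomorphic to $\pp(1,2)$, which is simply connected, so the principal $\Gamma$-bundle pulled back from $M\to[M/\Gamma]$ trivializes and forces the component to be a scheme. The same simply-connectedness argument underlies Proposition~\ref{prop_invariants_MPGL-nontwisted-sector}, where one must rule out stacky \emph{nodes} in the untwisted case; your sentence ``every representable stable map from a smooth genus zero curve factors through $M$'' ignores nodal domains, which is exactly where the work lies.

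\textbf{Computing the twisted three-point invariant.} You say the ``orbifold analogue of the Mu\~noz recursion'' and ``the Chen--Ruan pairing then pins down the coefficient of $\bbQ^1$'', but there is no recursion here and nothing in your proposal produces the actual number $\langle\alpha,1_\kappa h^s,1_{\kappa'}h^t\rangle_{0,3,1}$. The paper's mechanism is quite different and concrete: it shows (Lemma~\ref{lem_two_stacky_point}, Theorem~\ref{thm_invariants_MPGL}) that the relevant component $\overline{M}_{0,3}([M/\Gamma],1)^{0,\kappa,\kappa'}$ with smooth domain $\sC=\pp_{2,2}$ is smooth of expected dimension and isomorphic to a moduli space $\MM$ of $H$-stable rank-two bundles on the \emph{stacky} surface $\sS=\pp_{2,2}\times X$. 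This $\MM$ is identified, via wall-crossing and an orbifold Riemann--Roch computation (Proposition~\ref{prop_stable_bundle_M}), as a projective bundle over $J(X)$ of explicit fiber dimension, after which the invariant is read off as an integral on $J(X)$ exactly parallel to Mu\~noz's Donaldson-invariant computation. Without this identification you have no way to evaluate the twisted $\bbQ^1$-coefficient, and the relations $I_{\quantum}$ remain undetermined. You should also note that the monodromy condition forces $\kappa=\kappa'$ (since the orbifold fundamental group of $\pp_{2,2}$ is $\zz_2$), which the paper uses explicitly but which your ``$\kappa_1+\kappa_3=0$'' only half-captures.
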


Theorem \ref{thm_quantum_M-g>3} is proven in \S \ref{sec_GW} by relating Gromov-Witten invariants of $[M/\Gamma]$ to Donaldson invariants of $\sC\times X$, where $\sC$ is either $\pp^1$ or a stacky $\pp^1$.
Finally we calculate in detail for the case $g=2$ in \S \ref{sec_234}.

\subsection{Outline} 
In \S \ref{sec_PGL2} we review the moduli space of $\PGL_n$-bundles and the  endoscopic decomposition method for the fixed point locus of the torsion subgroup of the Jacobian. In \S \ref{sec_GW} we study genus zero Gromov-Witten invariants of the moduli space of $\PGL_2$-bundles and calculate the quantum orbifold products of $M^{\PGL_2}$. Finally in  \S \ref{sec_234} we calculate explicitly the example when $g=2$. 

\subsection{Convention}
We work over $\cc$ throughout of the paper.  Let $M$ be a smooth variety or a smooth Deligne-Mumford stack,  we always use $\qq$ coefficient for the homology and cohomology of $M$. We denote it by $H^*(M)=H^*(M,\qq)$.

\subsection*{Acknowledgments}

We thank Promit Kundu, Dragos Oprea, and Junliang Shen for valuable discussions.    
This work is partially supported by Simons Foundation Collaboration Grants and KU Research GO Grant.


\section{Moduli space of $\PGL_2$-bundles}\label{sec_PGL2}

In this section we fix a smooth curve $X$ of genus $g=g(X)\ge 2$. We review the moduli stack $M^{\PGL_2}_{d}(X)$ of $\PGL_2$-bundles over $X$, and calculate its inertia stack. 

\subsection{Moduli space of $\PGL_n$-bundles}\label{subsec_PGL}

Let $n\ge 2$ and $d$ be positive integers such that $\gcd(n,d)=1$.  The moduli space $M:=M_{n,L}(X)$ of stable rank $n$ bundles  over $X$ with fixed determinant $L\in\Pic^d(X)$ is a smooth quasi-projective variety of dimension $(n^2-1)(g-1)$.  

Let us recall the determinant line bundles (or the theta line bundles) on $M$.  Let $$\sE\to X\times M$$ be the universal vector bundle.
Let $\pi_1: \sE\to X\times M\to X$ and  $\pi_2: \sE\to X\times M\to M$  be projections.   For any vector bundle $F$ on $X$ define a {\em determinant line bundle} 
$$L_{F}:=\left(\det(R\pi_{2*}(\sE\otimes \pi_1^*F))\right)^{\vee}\in\Pic(M)$$
It is known that if $F\in K_0(X)$ is a $K$-theory class such that $\chi(E\otimes [F])=0$ for some $E\in M$, then the line bundle $L_F$ does not depend on the choice of $\sE$. 
Let $K_0(X)^{\perp}$ be a subgroup of $K_0(X)$ defined by 
$$K_0(X)^{\perp}:=\{[F]\in K_0(X)\on \chi(F\otimes E)=0 \text{~for some~}E\in M\}.$$
Also the $K$-group 
$K_0(X)\cong \zz^2=\{(\rk, \deg)\}$.  Thus, the determinant line bundle construction 
$$K_0(X)^{\perp}\to \Pic(M)$$
factors through the numerical $K$-group $K_0(X)_{\num}^{\perp}$. By Riemann-Roch, the pair $(n^\prime, d^\prime)\in K_0(X)$
is perpendicular to the data $(n,d)$ if and only if
$$nd^\prime+n^\prime d+n n^\prime(1-g)=0.$$
We assume that $\gcd(n,d)=1$, hence the solutions are of the form 
$$(n^\prime, d^\prime)=m\cdot (n, n(g-1)-d), \quad m\in \zz.$$
Thus, there is an isomorphism \cite{DN}
\begin{equation}\label{eqn_phi_picard}
\phi: \zz\to \Pic(M).
\end{equation}
The theta line bundle $\Theta$ is a line bundle corresponding to $1$ in the homomorphism (\ref{eqn_phi_picard}). 
The moduli space $M$ is a Fano variety of index two. Let $K_M$ be the canonical line bundle. We have 
$K_M\cong \Theta^{-2}$. 

Fix positive integers $n, d$ with $\gcd(n,d)=1, 1\le d\le n-1$, let $M^{\PGL_n}:=M^{\PGL_n}_{n,d}(X)$ be the moduli space of stable $\PGL_n$-bundles with degree $d$. This moduli space is the quotient $[M_{n,L}(X)/\Gamma]$, where $\Gamma=\Pic^0(X)[n]$ is the subgroup of $n$-torsions in the Picard group $\Pic^0(X)$ of degree zero. 
By \cite[Proposition 3.1]{BLS}, $\Theta^n$ descends to a line bundle $\hat{\Theta}$ in $M^{\PGL_n}$ and the Picard group $\Pic(M^{\PGL_n})\cong \zz \hat{\Theta}$.  The canonical line bundle $K_{M^{\PGL_n}}$ is isomorphic to $\hat{\Theta}^{-2}$.

\subsection{Fixed loci}\label{subsec_fixed_locus}

We are interested in the global quotient Deligne-Mumford stack $M^{\PGL_n}=[M/\Gamma]$. Let $\kappa\in \Gamma$ be an element. The action of $\Gamma$ on $M$ is given by tensor product $\kappa\cdot E=E\otimes \kappa$. The $\kappa$-fixed locus $M^{\kappa}$ is given by the generalized Prym variety studied in \cite{NR75}.  

We use the idea of relative moduli spaces introduced in \cite{MS} for the moduli space of Higgs bundles and the endoscopic decomposition of moduli space of Hitchin fibrations. Let $\kappa\in \Gamma\simeq (\zz/n\zz)^{2g}$ be a nontrivial element of order $m$. The line bundle $\kappa$ determines a $\zz_m$ Galois cover 
$$\pi: X^\prime\to X.$$
Here $X^\prime$ is a smooth curve of genus $g(X^\prime)=mg-m+1$. Let $n=mr$. 
Let $M_{r,d}(X^\prime)$ be the moduli space of stable rank $r$ bundles with degree $d$ over $X^\prime$.
Then we have the morphism 
\begin{equation}\label{eqn_morphism_q}
q: M_{r,d}(X^\prime)\to \Pic^d(X^\prime), \quad E\mapsto \det(E).
\end{equation}
Let 
\begin{equation}\label{eqn_morphism_pi*}
\pi_*:  \Pic^d(X^\prime)\to \Pic^d(X), \quad L\mapsto \det(\pi_{*}L)
\end{equation}
be the pushforward morphism and 
\begin{equation}\label{eqn_morphism_qpi}
q_{\pi}:=\pi_{*}\circ q: M_{r,d}(X^\prime)\to \Pic^d(X)
\end{equation}
be the composition. 
We define the relative moduli space of rank $r$ and degree $d$ associated with $\pi$ as 
$$M_{r,L}(\pi):=q_{\pi}^{-1}(L).$$

We have 
\begin{prop}\label{prop_relative_moduli}
The moduli space $M_{r,L}(\pi)$ is a disjoint union of $m$ nonsingular isomorphic components 
$$M_{r,L}(\pi)=\bigsqcup_{i=1}^{m}M_i.$$
For each $1\le i, j\le m$, there exists an isomorphism $\phi_{ij}: M_i\stackrel{\sim}{\longrightarrow}M_j$ induced by 
tensoring with a line bundle $\kappa_{ij}\in\Gamma$. 
\end{prop}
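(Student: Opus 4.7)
The plan is to factor $q_\pi = \pi_*\circ q$, analyze each map separately, and then exploit the natural $\Gamma$-action on $M_{r,d}(X')$ to produce the isomorphisms between components. Since $\gcd(n,d)=1$ and $n=mr$, we have $\gcd(r,d)=1$, so stability coincides with semistability on $X'$, and $M_{r,d}(X')$ is a smooth projective variety on which $q$ is smooth and surjective with connected fibers (each fiber being the smooth fixed-determinant moduli $M_{r,L'}(X')$).

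Next I would study $\pi_*\colon \Pic^d(X')\to\Pic^d(X)$, $L'\mapsto \det\pi_*L'$. The key observation is that this differs from the classical norm map $\mathrm{Nm}$ only by the constant twist $\det\pi_*\mathcal{O}_{X'}$, so its fibers coincide (after translation) with those of $\mathrm{Nm}$. By classical Prym theory for \'etale cyclic degree-$m$ covers (see \cite{NR75}), the group $K := \ker(\mathrm{Nm}\colon \Pic^0(X')\to\Pic^0(X))$ has exactly $m$ connected components, its identity component being the Prym variety $\Prym(X'/X)$, an abelian variety of dimension $(m-1)(g-1)$. Writing $\pi_*^{-1}(L) = \bigsqcup_{i=1}^m C_i$ with each $C_i$ a $\Prym$-torsor and setting $M_i := q^{-1}(C_i)$ then yields the decomposition $M_{r,L}(\pi) = \bigsqcup_{i=1}^m M_i$ into smooth connected components, smoothness coming from smoothness of $q$ and $C_i$, and connectedness from connectedness of the fibers of $q$ together with connectedness of each $C_i$.

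For the isomorphisms, the group $\Gamma = \Pic^0(X)[n]$ acts on $M_{r,d}(X')$ via $\mu\cdot E := E\otimes \pi^*\mu$. The projection formula together with the identity $\det(V\otimes L) = \det V \otimes L^{\rk V}$ gives
\[
\det\pi_*\bigl(\det(E\otimes\pi^*\mu)\bigr) = \det\bigl(\pi_*(\det E)\otimes\mu^r\bigr) = \det\pi_*(\det E)\otimes\mu^{rm} = L\otimes\mu^n = L,
\]
so this action preserves $M_{r,L}(\pi)$ and induces a homomorphism $\psi\colon \Gamma \to \pi_0(K) \cong \mathbb{Z}/m\mathbb{Z}$. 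Once $\psi$ is known to be surjective, for any pair $(i,j)$ one chooses $\kappa_{ij}\in\Gamma$ with $\psi(\kappa_{ij})\equiv j-i \pmod m$, and $E\mapsto E\otimes\pi^*\kappa_{ij}$ supplies the required isomorphism $\phi_{ij}\colon M_i\xrightarrow{\sim}M_j$.

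The main obstacle is the surjectivity of $\psi$. The hard part is a Prym-theoretic count: $\pi^*$ sends $\Pic^0(X)[m]\subset \Gamma$ into $K$ (since $\mathrm{Nm}\circ\pi^* = [m]$) and, after quotienting by the kernel $\langle\kappa\rangle$, embeds a subgroup of order $m^{2g-1}$ into $K$; the intersection $\pi^*\Pic^0(X) \cap \Prym$ has order $m^{2g-2}$, a standard computation for \'etale cyclic covers, so the image of $\psi$ has order $m$, proving surjectivity. Alternatively, this fact can be read off directly from the structure theory of generalized Prym varieties in \cite{NR75}.
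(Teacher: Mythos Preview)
Your argument is correct and is essentially what the paper cites: the paper's proof consists solely of the remark that this is the special case of \cite[Proposition~1.1]{MS} with zero Higgs field, and your Prym-theoretic decomposition of $\pi_*^{-1}(L)$ together with the $\Gamma$-action via $E\mapsto E\otimes\pi^*\mu$ is exactly the mechanism underlying that reference. One small point worth making explicit is that the map $\mu\mapsto\mu^r$ surjects $\Gamma=\Pic^0(X)[n]$ onto $\Pic^0(X)[m]$ (by divisibility of $\Pic^0(X)$), so your surjectivity count for $\psi$ really does apply to the $\Gamma$-action and not merely to the $\Pic^0(X)[m]$-action.
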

\begin{proof}
This is exactly the result in \cite[Proposition 1.1]{MS} when the Higgs fields are zero.
\end{proof}

The group $\zz_m$ acts on the moduli space $M_{r,L}(\pi)$.   
\begin{prop}
We have the following morphisms 
\begin{equation}\label{eqn_quotient_fixed}
M_{r,L}(\pi)\stackrel{q_{M}}{\longrightarrow} M^{\kappa}\subset M,
\end{equation}
where $q_{M}$ is the  quotient map such that 
$$M_{r,L}(\pi)/\zz_m\cong M^{\kappa}.$$
\end{prop}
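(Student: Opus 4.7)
The plan is to exhibit $q_M$ as the pushforward map $E' \mapsto \pi_* E'$, verify that its image lies in $M^\kappa$, and then identify the fibres of $q_M$ with the orbits of the Galois $\zz_m$-action on $X'$. For the image, since $\pi \colon X' \to X$ is the étale cyclic cover determined by $\kappa$, one has $\pi^*\kappa \cong \sO_{X'}$, and the projection formula yields
$$\pi_* E' \otimes \kappa \;=\; \pi_*\bigl(E' \otimes \pi^*\kappa\bigr) \;\cong\; \pi_* E',$$
so $\pi_* E'$ is $\kappa$-fixed. The rank and determinant conditions are forced by $E' \in q_\pi^{-1}(L)$, while stability of $\pi_* E'$ under the assumption $\gcd(n,d)=1$ is a standard argument in the style of Narasimhan--Ramanan \cite{NR75}: any destabilizing subsheaf of $\pi_* E'$ would, via pullback and averaging over $\zz_m$, produce a destabilizer of the stable bundle $E'$.

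Next I would check $\zz_m$-invariance of $q_M$ and injectivity of the induced map $M_{r,L}(\pi)/\zz_m \to M^\kappa$. Invariance is immediate, since $\pi \circ \sigma = \pi$ gives $\pi_*\sigma^* E' \cong \pi_* E'$ for each $\sigma \in \zz_m$. For injectivity, an isomorphism $\pi_* E'_1 \cong \pi_* E'_2$ pulls back via étale base change to
$$\bigoplus_{\sigma \in \zz_m}\sigma^* E'_1 \;\cong\; \pi^*\pi_* E'_1 \;\cong\; \pi^*\pi_* E'_2 \;\cong\; \bigoplus_{\sigma \in \zz_m}\sigma^* E'_2,$$
and Krull--Schmidt applied to stable summands forces $E'_2 \cong \sigma^* E'_1$ for some $\sigma$.

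For surjectivity, given $E \in M^\kappa$, any isomorphism $\varphi \colon E \xrightarrow{\sim} E \otimes \kappa$ iterates to $\varphi^m \colon E \to E \otimes \kappa^{\otimes m} \cong E$; stability of $E$ makes $\End(E) = \cc$, so $\varphi^m$ is scalar, and rescaling $\varphi$ by an $m$-th root yields $\varphi^m = \id$. The pair $(E,\varphi)$ is then a module over the sheaf of algebras $\pi_*\sO_{X'} \cong \bigoplus_{i=0}^{m-1}\kappa^{-i}$, and the equivalence between such algebra modules on $X$ and coherent sheaves on $X'$ produces a bundle $E'$ on $X'$ with $\pi_* E' \cong E$; the $m$-fold ambiguity in the rescaling of $\varphi$ by $m$-th roots of unity precisely parametrizes the $\zz_m$-orbit of lifts, matching the $m$ components of $M_{r,L}(\pi)$ described in Proposition \ref{prop_relative_moduli}.

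The main obstacle will be the descent/normalization step, together with the bookkeeping for the determinant: one must verify that the normalized $\varphi$ with $\varphi^m = \id$ produces an $E'$ whose associated invariants push forward to the prescribed $L$, placing $E'$ in the correct fibre of $q_\pi$ rather than in a translate. Once this scheme-theoretic bijection on closed points is established, a tangent-space comparison---using smoothness of $M_{r,L}(\pi)$ from Proposition \ref{prop_relative_moduli} and the fact that the differential of $q_M$ factors through $\zz_m$-coinvariants---upgrades the set-theoretic identification to the asserted isomorphism $M_{r,L}(\pi)/\zz_m \cong M^\kappa$.
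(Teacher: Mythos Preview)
Your argument is correct and is, in essence, the classical Narasimhan--Ramanan construction \cite{NR75} that underlies the citation the paper gives. The paper's own proof is nothing more than a pointer to \cite[\S 1.5]{MS} (specializing the Higgs-bundle version to zero Higgs field) together with the remark that $\zz_m$ permutes the $m$ components of $M_{r,L}(\pi)$ transitively, so the quotient is identified with any one component $M_i$. You instead build the map $q_M = \pi_*$ explicitly, verify $\kappa$-fixedness via the projection formula, and recover the $\zz_m$-orbit structure of the fibres from the decomposition $\pi^*\pi_* E' \cong \bigoplus_\sigma \sigma^* E'$ and Krull--Schmidt; for surjectivity you reproduce the standard descent argument turning $(E,\varphi)$ with $\varphi^m=\id$ into a $\pi_*\sO_{X'}$-module. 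This is exactly the content of what \cite{MS} and \cite{NR75} do, so the two approaches are the same mathematics, with the paper outsourcing the details and you writing them out.

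One small point worth tightening: the determinant bookkeeping you flag as an obstacle is genuinely the place to be careful. The fibre condition $E'\in q_\pi^{-1}(L)$ reads $\det(\pi_*(\det E'))=L$, whereas landing in $M=M_{n,L}(X)$ requires $\det(\pi_* E')=L$; these coincide for \'etale cyclic covers via the norm formula $\det(\pi_* E')\cong \mathrm{Nm}(\det E')\otimes (\det\pi_*\sO_{X'})^{r}$, and the paper implicitly absorbs this into the citation. Your outline acknowledges the issue but does not resolve it; in a finished proof you would want to invoke this identity explicitly.
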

\begin{proof}
This is actually the special case of Higgs field zero in \cite[\S 1.5]{MS}. The space $M_{r,L}(\pi)$  is a disjoint union of $m$ non-singular isomorphic components 
$M_{r,L}(\pi)=\coprod_{j=1}^{m}M_j$, and the cyclic group $G_{\pi}=\zz_m$ acts on them transitively.  Therefore, we get the quotient space in 
the proposition. 
\end{proof}

When $n$ is a prime number, we have the following 
\begin{prop}\label{prop_prime_n}
Suppose the rank $n$ is a prime integer.  Then for each $\kappa\in\Gamma$, $\kappa$ has order $n$ and 
the $\kappa$ fixed locus $M^{\kappa}\subset M$ is actually the Prym variety of dimension $(n-1)g-n+1$. 
\end{prop}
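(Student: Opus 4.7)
The plan is to derive this directly from Proposition~\ref{prop_relative_moduli} together with the preceding quotient identification, using the fact that primality of $n$ collapses the bookkeeping to its simplest possible case.

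Since $\Gamma\cong(\zz/n\zz)^{2g}$ and $n$ is prime, every nonzero $\kappa\in\Gamma$ has order exactly $n$. In the notation of Proposition~\ref{prop_relative_moduli} this forces $m=n$ and hence $r=n/m=1$. The relative moduli space therefore reduces to $M_{1,L}(\pi)$, and because for rank one bundles the map $q$ of \eqref{eqn_morphism_q} is the identity on $\Pic^d(X')$, we obtain
\[
M_{1,L}(\pi)=\pi_\ast^{-1}(L),
\]
the fiber over $L$ of the pushforward (norm) morphism of \eqref{eqn_morphism_pi*}. Since $\kappa\in\Pic^0(X)[n]$ is a torsion line bundle, the associated cyclic cover $\pi\colon X'\to X$ is \'{e}tale of degree $n$, and Riemann--Hurwitz gives $g(X')=n(g-1)+1=ng-n+1$.

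A fiber of $\pi_\ast$ is a torsor under $\ker\bigl(\pi_\ast\colon\Pic^0(X')\to\Pic^0(X)\bigr)$, whose identity component is by definition the Prym variety $\Prym(X'/X)$; this is an abelian variety of dimension $g(X')-g(X)=(n-1)g-n+1$. By Proposition~\ref{prop_relative_moduli}, $M_{1,L}(\pi)$ decomposes into $m=n$ nonsingular isomorphic components, each a translate of $\Prym(X'/X)$ by one of the line bundles $\kappa_{ij}\in\Gamma$. The preceding quotient proposition then gives $M^\kappa\cong M_{1,L}(\pi)/\zz_n$. To conclude it suffices to check that the Galois action of $\zz_n=G_\pi$ permutes the $n$ connected components of $M_{1,L}(\pi)$ transitively; granting this, the quotient collapses the $n$ components to a single one and $M^\kappa$ is identified with $\Prym(X'/X)$ of the claimed dimension.

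The only nontrivial step is the transitivity of the $G_\pi$-action on $\pi_0(M_{1,L}(\pi))$. I would verify this either by tracking the explicit twisting line bundles $\kappa_{ij}$ of Proposition~\ref{prop_relative_moduli} under the Galois action, or by invoking the analogous statement for Hitchin fibers in \cite{MS} and specializing to zero Higgs field. The dimension count is then a one-line application of Riemann--Hurwitz, and the rest of the argument is formal.
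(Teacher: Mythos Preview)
Your argument is correct and follows essentially the same route as the paper's proof: both specialize to $m=n$, $r=1$, identify $M_{1,L}(\pi)$ with the fiber of the norm map $\pi_\ast$ over $L$, and read off the Prym variety of the stated dimension. Your only declared worry, the transitivity of the $G_\pi$-action on $\pi_0\bigl(M_{1,L}(\pi)\bigr)$, is already asserted in the proof of the preceding quotient proposition, so you can simply cite it rather than re-derive it.
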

\begin{proof}
If $n$ is a prime number, then $\kappa$ has order $n$ and $\pi: X^\prime\to X$ is a $\zz_n$-cover. 
$X^\prime$ has genus $ng-n+1$.  The moduli space  $M_{r,d}(X^\prime)$ is $\Pic^d(X^\prime)$, and  the fiber  $M_{r,L}(\pi)$ of the morphism $q_{\pi}$ in (\ref{eqn_morphism_qpi}) is a disjoint union of $m$ isomorphic abelian varieties in which each one is isomorphic to the Prym variety $\Prym(X^\prime/X)$ of dimension $(n-1)g-n+1$. 
\end{proof}

\subsection{Chen-Ruan cohomology}\label{subsec_Chen-Ruan}

The Chen-Ruan orbifold cohomology of $M^{\PGL_n}:=M_{d}^{\PGL_n}(X)$ is additively the cohomology of the inertia stack $IM^{\PGL_n}$. From the results in \S \ref{subsec_fixed_locus}, 
\begin{equation}\label{eqn_inertia_stack}
IM^{\PGL_n}=[M/\Gamma]\sqcup\bigsqcup_{0\neq\kappa\in\Gamma}[M^{\kappa}/\Gamma].
\end{equation}
The Chen-Ruan cohomology is 
\begin{equation}\label{eqn_Chen-Ruan_cohomology}
H^*_{\CR}(M^{\PGL_n})=H^*([M/\Gamma])\oplus\bigoplus_{0\neq\kappa\in\Gamma}H^{*-2\age(\kappa)}([M^{\kappa}/\Gamma]),
\end{equation}
where $\age(\kappa)$ is the age (degree shifting number) of the component $[M^\kappa/\Gamma]$.

The Chen-Ruan cup product is defined using the genus zero, degree zero Gromov-Witten invariants of $M^{\PGL_n}$, as follows. Let $\alpha_1\in H^{*-2\age(\kappa_1)}([M^{\kappa_1}/\Gamma]), \alpha_2\in H^{*-2\age(\kappa_2)}([M^{\kappa_2}/\Gamma])$. We let $\kappa_3\in\Gamma$ be such that $\kappa_1\kappa_2\kappa_3=1$ in $\Gamma$, and $\overline{M}_{0,3}^{\kappa_1,\kappa_2,\kappa_3}(M^{\PGL_n},0)$ be the moduli space of  degree zero twisted stable maps from genus zero, $3$-marked, twisted stable curves to $M^{\PGL_n}$, such that the evaluation maps $\ev_i$ have images contained in $[M^{\kappa_i}/\Gamma]$.
The Chen-Ruan cup product is defined as:
\begin{equation}\label{eqn_Chen-Ruan_product}
\langle\alpha_1\cup_{\CR}\alpha_2, \alpha_3\rangle=
\langle\alpha_1, \alpha_2, \alpha_3\rangle_{0,3,0}^{M^{\PGL_n}}=
\int_{[\overline{M}_{0,3}^{\kappa_1,\kappa_2,\kappa_3}(M^{\PGL_n},0)]^{\vir}}\ev_1^*\alpha_1\cdot \ev_2^*\alpha_2\cdot \ev_3^*\alpha_3,
\end{equation}
where $[\overline{M}_{0,3}^{\kappa_1,\kappa_2,\kappa_3}(M^{\PGL_n},0)]^{\vir}$ is the virtual fundamental class of the moduli space $\overline{M}_{0,3}^{\kappa_1,\kappa_2,\kappa_3}(M^{\PGL_n},0)$ and 
$\langle\alpha_1\cup_{\CR}\alpha_2, \alpha_3\rangle$ is the orbifold pairing. 

The virtual fundamental class $[\overline{M}_{0,3}^{\kappa_1,\kappa_2,\kappa_3}(M^{\PGL_n},0)]^{\vir}$ is given by the Euler class of the obstruction bundle \cite{CR}. This obstruction bundle may be described as follows. We write the tangent bundle $T_{M^{\PGL_n}}$ as the decomposition under the group $\langle \kappa_i\rangle$-representation:
$$T_{M^{\PGL_n}}=\bigoplus_{0\le a_i<1}L_{a_i},$$
where $L_{a_i}$ means the subbundle that $\kappa_i$ acts as $e^{2\pi i a_i}$. 
Then 
\begin{equation}\label{eqn_obstruction_bundle}
[\overline{M}_{0,3}^{\kappa_1,\kappa_2,\kappa_3}(M^{\PGL_n},0)]^{\vir}=Eu\left(\bigoplus_{\substack{(a_1, a_2, a_3)\\
a_1+a_2+a_3=2}}L_{a_1}\oplus L_{a_2}\oplus L_{a_3}\right)\cap [\overline{M}_{0,3}^{\kappa_1,\kappa_2,\kappa_3}(M^{\PGL_n},0)],
\end{equation}
where $Eu$ stands for Euler class.   The  virtual fundamental class $[\overline{M}_{0,3}^{\kappa_1,\kappa_2,\kappa_3}(M^{\PGL_n},0)]^{\vir}$ has dimension 
$\dim(M^{\PGL_n})-\sum_{i=1}^{3}\age(\kappa_i)$ and the obstruction bundle has rank:
\begin{equation}\label{eqn_rank_obstruction-bundle}
\rk=\dim(\overline{M}_{0,3}^{\kappa_1,\kappa_2,\kappa_3}(M^{\PGL_n},0))-(\dim(M^{\PGL_n})-\sum_{i=1}^{3}\age(\kappa_i)).
\end{equation}

For our purpose, we describe in detail the calculation in \cite{BP} of the Chen-Ruan cup products for the case of $M^{\PGL_2}=[M/\Gamma]$. In this case $\Gamma\simeq (\zz/2\zz)^{2g}$. By Proposition \ref{prop_prime_n}, for a nontrivial $\kappa\in\Gamma$, the $\kappa$-fixed locus $M^{\kappa}$ is the smooth Prym variety $\Pic(X^\prime/X)$ of dimension $g-1$. It is a smooth abelian variety of dimension $g-1$.

Recall the presentation of $H^*_{I, \CR}([M/\Gamma])$ given in (\ref{eqn_Chen-Ruan_I}).
We describe the relations $I_{\orb}$ coming from Chen-Ruan cup products.

For any $0\neq \kappa\in \Gamma$ so that $[M^{\kappa}/\Gamma]=[\Prym(X^\prime/X)/\Gamma]$, and  for an even integer $s$ between $[0, 2(g-1)]$, let $h_s$ denote any class in $h_{\sum_{i=0}^{s-1}r^i_{\kappa}+1}, \cdots, h_{\sum_{i=0}^{s-2}r^i_{\kappa}+r_{\kappa}^s}$, i.e., the class in $H^{s}(\Prym(X^\prime/X))$. Then we have the following four cases. 
$$
\begin{cases}
(1)& \kappa_1=\kappa_2=\kappa\neq 0;\\
(2)& \kappa_1=0, \kappa_2=\kappa\neq 0;\\
(3)& \kappa_1=\kappa\neq 0, \kappa_2=0;\\
(4)& \kappa_1\neq \kappa_2\neq 0.
\end{cases}
$$
For a nontrivial $\kappa\in\Gamma$, $\age(\kappa)=(g-1)$ was calculated in \cite{BP}. In case (1), $\kappa_3=0$; and in cases (2), (3), $\kappa_3=\kappa$. In these three cases, we have $\overline{M}_{0,3}^{\kappa_1,\kappa_2,\kappa_3}(M^{\PGL_n},0))=[M^{\kappa}/\Gamma]$. It is easy to calculate 
the rank in (\ref{eqn_rank_obstruction-bundle}) to be $$\rk=(g-1)-(3g-3)-2(g-1)=0.$$
Therefore, the Chen-Ruan cup product is
\begin{equation}\label{eqn_Chen-Ruan_product_123}
\langle\alpha_1\cup_{\CR}\alpha_2, \alpha_3\rangle=
\int_{[M^{\kappa}/\Gamma]}\ev_1^*\alpha_1\cdot \ev_2^*\alpha_2\cdot \ev_3^*\alpha_3.
\end{equation}
We list the calculations here. 

(1). Since $\int_{[\Prym(X^\prime/X)/\Gamma]}h^{g-1}=\frac{1}{|\Gamma|}$,  we choose a cohomology class $\Omega\in H^*([M/\Gamma])$ such that $\int_{[M/\Gamma]}\Omega\cdot h_{(g-1)-\frac{1}{2}(s+t)}=\frac{1}{|\Gamma|}$.
We have 
$$
1_{\kappa}h_{s}\cup_{\CR}1_{\kappa}h_{t}=
\begin{cases}
\Omega, & s+t\le 2(g-1);\\
0 ,& \text{otherwise}.
\end{cases}
$$

(2).  Let $\iota: [M^{\kappa}/\Gamma]\hookrightarrow M^{\PGL_2}$ be the inclusion. 
We have $\alpha_1\cup_{\CR}\alpha_2=\iota^*\alpha_1\cdot \alpha_2$.  Thus, we calculate
$$
\alpha\cup_{\CR}1_{\kappa}h_{s}=
\begin{cases}
1_{\kappa}\cdot \alpha, & s< 2(g-1);\\
0 ,& s=2(g-1).
\end{cases}
$$
$$
\beta\cup_{\CR}1_{\kappa}h_{s}=
\begin{cases}
1_{\kappa}\cdot \beta, & s< 2(g-1)-2;\\
0 ,& s\ge 2(g-2)-2.
\end{cases}
$$
$$
\gamma\cup_{\CR}1_{\kappa}h_{s}=
\begin{cases}
1_{\kappa}\cdot \gamma, & s< 2(g-1)-4;\\
0 ,& s\ge 2(g-1)-4.
\end{cases}
$$

(3). Still let $\iota: [M^{\kappa}/\Gamma]\hookrightarrow M^{\PGL_2}$ be the inclusion. 
We have $\alpha_1\cup_{\CR}\alpha_2=\alpha_1\cdot \iota^*\alpha_2$.  The calculation is the same as (2) with the order reversed. 

(4).  In this case  we have $\overline{M}_{0,3}^{\kappa_1,\kappa_2,\kappa_3}(M^{\PGL_n},0))=[M^{\kappa_1}/\Gamma]\cap [M^{\kappa_2}/\Gamma]$. 
If $[M^{\kappa_1}/\Gamma]\cap [M^{\kappa_2}/\Gamma]$ is empty, then $\alpha_1\cup_{\CR}\alpha_2=0$. 

If $[M^{\kappa_1}/\Gamma]\cap [M^{\kappa_2}/\Gamma]$ is non-empty,  then from \cite{BP}, in the intersection product 
$$\mu: H^1(X, \zz/2\zz)\times  H^1(X, \zz/2\zz)\to H^2(X,\zz/2\zz)\cong\mu_2,$$
$\mu(\kappa_1, \kappa_2)\neq 0$ and the intersection $[M^{\kappa_1}/\Gamma]\cap [M^{\kappa_2}/\Gamma]$ is a stacky point $B(\mu_2\times\mu_2)$. It is easy to calculate the rank in (\ref{eqn_rank_obstruction-bundle}) to be $\rk=(3g-3)-3(g-1)=0$. Therefore, the Chen-Ruan cup product is
\begin{equation}\label{eqn_Chen-Ruan_product_123}
\langle\alpha_1\cup_{\CR}\alpha_2, \alpha_3\rangle=
\int_{B(\mu_2\times\mu_2)}\ev_1^*\alpha_1\cdot \ev_2^*\alpha_2\cdot \ev_3^*\alpha_3.
\end{equation}

Let $h_s^i (i=1,2,3)$ be the cohomology classes on $H^{s}([M^{\kappa_i}/\Gamma])$. 
We calculate 
$$\int_{[M^{\kappa_3}/\Gamma]}1_{\kappa_3}\cdot h^3_{2(g-1)}=\frac{1}{|\Gamma|}=\frac{1}{2^{2g}}.$$
Thus, we have 
$$
1_{\kappa_1}h^1_{s}\cup_{\CR}1_{\kappa_2}h^2_{t}=
\begin{cases}
2^{2g-2}\cdot 1_{\kappa_3}h^3_{2(g-1)}, & s=t=0;\\
0 ,& \text{otherwise}.
\end{cases}
$$
Therefore,  $I_{\orb}$ is the collection of relations coming from the Chen-Ruan products in (1), (2), (3) and (4). 


\section{Gromov-Witten invariants of moduli spaces of $\PGL_2$-bundles}\label{sec_GW}

In this section we calculate the genus zero Gromov-Witten invariants of $M^{\PGL_2}=[M/\Gamma]$, where 
$M=M_{2,L}(X)$ is the moduli space of stable rank $2$ bundles with fixed determinant 
$L\in \Pic^1(X)$, and $\Gamma=\Pic^0(X)[2]=(\zz/2\zz)^{2g}$ acts on $M$ by tensor product.  We fix $L$ to be degree $1$ since for any other odd degree $d$ $L\in\Pic^d(X)$, the moduli spaces are isomorphic. 

\subsection{Moduli spaces}\label{subsec_moduli_spaces}

\subsubsection{Stable map spaces}
Let $\overline{M}_{0,3}(M^{\PGL_2},e)$ be the moduli stack of stable maps $f: (\sC,\Sigma_i)\to M^{\PGL_2}$ such that 
\begin{enumerate}
\item $(\sC, \Sigma_i)$ is a twisted prestable genus zero curve with three marked gerbes $\Sigma_i\subset \sC$;
\item $f: \sC\to M^{\PGL_2}$ is stable map of degree $e=f^*(\hat{\Theta})$, where $\hat{\Theta}$ is the theta line bundle on $M^{\PGL_2}$;
\end{enumerate}
The stack $\overline{M}_{0,3}(M^{\PGL_2},e)$ has a decomposition into (unions of) connected components
$$\overline{M}_{0,3}(M^{\PGL_2},e)=\bigsqcup_{(\kappa_1, \kappa_2, \kappa_3)\in \Gamma^3}\overline{M}_{0,3}(M^{\PGL_2},e)^{(\kappa_1, \kappa_2, \kappa_3)},$$
where $\overline{M}_{0,3}(M^{\PGL_2},e)^{(\kappa_1, \kappa_2, \kappa_3)}$ is the locus of maps 
such that the evaluation maps $\ev_i: \overline{M}_{0,3}(M^{\PGL_2},e)\to IM^{\PGL_2}$ have images contained in $[M^{\kappa_i}/\Gamma]$.
The virtual dimension of $\overline{M}_{0,3}(M^{\PGL_2},e)^{(\kappa_1, \kappa_2, \kappa_3)}$ is
\begin{equation}\label{eqn_vd_stable_map}
\vd=(3g-3-3)+3+2e-\sum_{i=1}^{3}\age(\kappa_i).
\end{equation}

Let $\bbQ$ be the quantum parameter, which has degree $2$. Let $\alpha_i\in H^*([M^{\kappa_i}/\Gamma])$. The quantum orbifold product $\alpha_1\cup_{\bbQ}\alpha_2$ is defined by 
\begin{equation}\label{eqn_quantum_product_orbifold}
\langle\alpha_1\cup_{\bbQ}\alpha_2, \alpha_3\rangle=\sum_{e\ge 1}\langle \alpha_1, \alpha_2, \alpha_3\rangle_{0,3,e}^{M^{\PGL_2}}\bbQ^e
\end{equation}

\subsubsection{A projective bundle construction}\label{subsec_projective_bundle}

To better understand the moduli space of stable maps,  we introduce a projective bundle $N$ over the Jacobian\footnote{Note that, since $X$ is smooth and projective, the Jacobian of $X$ is compact.} $J=\Pic^0(X)$ of the curve $X$. Let $\{\phi_i\}$ be a symplectic basis of $H^1(J)$. Let $p: X\times J\to J$ be the standard projection and let $\tL\to X\times J$ be the universal line bundle. Then $c_1(\tL)=\sum (\xi_i\otimes \phi_i)\in H^1(X)\otimes H^1(J)$. Then 
$$c_1(\tL)^2=-2[X]\otimes \omega\in H^2(X)\otimes H^2(J),$$
where $\omega=\sum_{i=1}^{g}\phi_i\wedge \phi_{i+g}$ is the natural symplectic form for $J$.

The space $N$ is defined to be the projective bundle $\pp(\tE^{\vee})$, where 
$\tE=\tE xt^1_p(L\otimes \tL^{-1}, \tL)=R^1p_*(\tL^2\otimes L^{-1})$, where $L\in\Pic^1(X)$ is the fixed degree one line bundle on $X$. Thus, $N$ parametrizes nontrivial extensions 
$$0\to \sL\longrightarrow E\longrightarrow L\otimes \sL^{-1}\to 0,$$
where $\sL\in J$.  We can calculate that the groups 
$\Ext^1(L\otimes \sL^{-1}, \sL)=H^1(X, \sL^2\otimes L^{-1})=H^0(X, \sL^{-2}\otimes L\otimes K_X)$ are of constant rank equal to $g$. Thus, $N$ is a projective bundle over $J$ with fiber $\pp^{g-1}$. The bundle $E$ is stable, giving a morphism 
\begin{equation}\label{eqn_morphism_N_M}
\psi: N\to M=M_{2,L}(X).
\end{equation}

In general when $g\ge 3$, we have $\dim(M)=3g-3 > \dim(N)=2g-1$. When $g=2$,  $M$ and $N$ have the same dimension $3$. In this case we have:

\begin{prop}(\cite[\S 3]{Newstead})\label{prop_g=2_N}
When $g=2$, the morphism $\psi: N\to M$ is finite of degree $4$. 
\end{prop}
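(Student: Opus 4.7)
The plan is as follows. First identify the fiber of $\psi$: since $N$ is the projectivization of the Ext bundle $\tE^{\vee}$, a point $([\varepsilon],\sL)\in N$ with $\sL\in J$ is sent by $\psi$ to the middle term $E$ of the extension represented by $[\varepsilon]$. Two such points produce isomorphic middle terms iff they correspond to the same saturated inclusion $\sL\hookrightarrow E$ (with $\deg\sL=0$ and quotient $L\otimes\sL^{-1}$), the $\cc^{*}$-ambiguity in the choice of inclusion being absorbed exactly by the projectivization. Thus $\psi^{-1}(E)$ is in natural bijection with the set $S(E)$ of saturated degree-$0$ sub-line-bundles of $E$.

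Next I would establish surjectivity and finiteness. By Nagata's bound the Segre invariant satisfies $s(E)\le g=2$; stability forces $s(E)\ge 1$; and the parity constraint $s(E)\equiv d\pmod{2}$ with $d=1$ pins $s(E)=1$ for every stable $E$. Hence $S(E)\ne\emptyset$ and $\psi$ is surjective. For finiteness I would argue that any two distinct $\sL_{1},\sL_{2}\in S(E)$ generate a rank-$2$ subsheaf $\sL_{1}\oplus\sL_{2}\hookrightarrow E$ with torsion cokernel of length $1$, forcing the determinantal relation $\sL_{1}\otimes\sL_{2}\cong L(-x)$ for a unique $x\in X$. A hypothetical positive-dimensional family $\{\sL_{t}\}$ through a fixed $\sL_{0}$ would then, via $\sL_{t}=L\otimes\sL_{0}^{-1}(-x_{t})$, correspond to a curve $t\mapsto x_{t}$ in $X$, and the simultaneous existence of inclusions $\sL_{t}\hookrightarrow E$ for all $t$ would force the rank-$2$ bundle $F:=E\otimes\sL_{0}\otimes L^{-1}$ of degree $-1$ to admit a section of $F(x)$ for every $x\in X$, an abundance of sections that contradicts the simplicity of the stable bundle $E$. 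Being a projective morphism with finite fibers between irreducible projective varieties of equal dimension $3$, $\psi$ is therefore finite.

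Finally, I would compute $\deg\psi=|S(E)|$ for a generic $E$. Since $\chi(E\otimes\sL^{-1})=d-2\deg\sL+2(1-g)=-1$ for $\sL\in J$, the expected codimension of the Brill--Noether jump locus $W(E)=\{\sL\in J:h^{0}(E\otimes\sL^{-1})>0\}$ is $h^{0}\cdot h^{1}=2$, so $W(E)$ is generically $0$-dimensional and its length is a Chern-class expression on $J$ (essentially the top Chern class of the virtual pushforward $R\pi_{J*}(p_{X}^{*}E\otimes\tL^{-1})$) which one evaluates to $2^{g}=4$. An independent cross-check comes from Newstead's classical description of $M$ for $g=2$ (odd degree) as a smooth intersection of two quadrics $Q_{1}\cap Q_{2}\subset\pp^{5}$, under which $J$ is the Fano variety of lines and $N$ the universal line; then $\psi$ is the incidence morphism and $\deg\psi$ equals the number of lines through a general point of $Q_{1}\cap Q_{2}$, namely the four points of intersection of two quadrics in $\pp^{3}$ obtained by projection from that point.

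The main obstacle in this plan is the finiteness step: the Brill--Noether count handles the generic fiber immediately, but ruling out positive-dimensional jump loci over every (not merely generic) stable $E$ requires the case-by-case subbundle analysis sketched above; the degree-$4$ assertion then follows from either the Brill--Noether Chern-class computation or the Fano-variety-of-lines interpretation.
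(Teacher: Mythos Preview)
Your argument is correct and considerably more detailed than the paper's. The paper's proof consists entirely of invoking Newstead \cite[\S 3]{Newstead}: for $g=2$ the moduli space $M$ is the smooth intersection of two quadrics in $\pp^5$, its Fano variety of lines is identified with the Jacobian $J$, and through every point of $M$ there pass exactly four lines; since for $g=2$ the space $N$ is a $\pp^{g-1}=\pp^1$-bundle over $J$, it is the universal line and $\psi$ is the incidence morphism, hence finite of degree $4$. You quote this same description as your ``independent cross-check'', so your secondary route already coincides with the paper's entire argument.

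Your primary route---forcing $s(E)=1$ via Nagata's bound and parity, the determinantal constraint $\sL_1\otimes\sL_2\cong L(-x)$ on pairs of maximal sub-line-bundles, and the Porteous/Brill--Noether count $2^g=4$ on $J$---is more intrinsic and has the virtue of generalising beyond $g=2$ (it is essentially the Lange--Narasimhan count of maximal sub-line-bundles of a generic stable rank-$2$ bundle). One refinement worth making explicit: your bijection $\psi^{-1}(E)\cong S(E)$ tacitly assumes $h^0(E\otimes\sL^{-1})\le 1$ for every $\sL\in J$, since a priori the fibre is $\coprod_{\sL}\pp H^0(E\otimes\sL^{-1})$; ruling out $h^0\ge 2$ everywhere is precisely the finiteness obstacle you flag, and is what Newstead's ``four lines through \emph{every} point'' supplies for free.
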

\begin{proof}
In \cite[\S 3]{Newstead},  for any point $\xi\in M$, there exists $4$ lines passing through $\xi$.  The number of lines in $M$ is isomorphic to the Jacobian $J$ of $X$.  Thus, just taking the natural projective $\pp^1$ bundle over $J$ we get the projective bundle $N$ and the $4:1$ map to $M$. 
\end{proof}

We give another explanation of the projective bundle $N$, generalizing the case of $g=2$ in \cite{Newstead}.  Let $V$ be the variety of the family of projective spaces $\pp^{g-1}$ on the moduli space $M=M_{2, L}(X)$.
Let $\widetilde{V}$ be the family itself, which means that $\widetilde{V}\subset V\times M$ is the subvariety consisting of points 
$\{(l, a))\on , l\cong \pp^{g-1}, a\in l\subset M\}$.  Let 
$$p_1: \widetilde{V}\to V,\quad  p_2: \widetilde{V}\to M$$
be the projections.  Let 
$p_2\times 1_{X}:  \widetilde{V}\times X\to M\times X$ be the projection map and 
$\sE_1=(p_2\times 1_{X})^*\sE$ be the pullback of the universal bundle. 
Since any rank $2$ stable bundle $E$ on $X$ is a nontrivial extension 
for  line bundles $\sL^\prime, \sL^{\prime\prime}$ such that $\deg(\sL^\prime)-\deg(\sL^{\prime\prime})=1$,  thus we have 
an extension
$$0\to N^\prime \rightarrow \sE_1\rightarrow N^{\prime\prime}\to 0$$
for the pullback universal vector bundle.   Now we restrict the extension to 
$p_1^{-1}(l)\times X$ and get an extension
\begin{equation}
0\to N^\prime_{l} \rightarrow \sE_{1,l}\rightarrow N^{\prime\prime}_{l}\to 0
\end{equation}
over $l\times X$.  Therefore, let $\pi_1: p_1^{-1}(l)\times X\to l$ and $\pi_2: p_1^{-1}(l)\times X\to X$, and we have  
$$N^\prime_{l}\cong \pi_1^*(H^r)\otimes \pi_2^*(\sL_l^\prime),$$
$$N^{\prime\prime}_{l}\cong \pi_1^*(H^s)\otimes \pi_2^*(\sL_l^{\prime\prime}),$$
where $H$ is the hyperplane bundle on $\pp^{g-1}$,  $r, s$ are integers and $\sL^\prime, \sL^{\prime\prime}$ are line bundles over $X$ which 
depend only on $l$.   Let $\sL^\prime$ has degree one and let $J_1$ be the variety of line bundles of degree $1$ over $X$. Then let 
$$\sigma: V\to J_1$$
be the map sending $l$ to $\sL_l^\prime$.  We have 
\begin{thm}\label{thm_map_sigma}
The map $\sigma: V\to J_1$ is an isomorphism. 
\end{thm}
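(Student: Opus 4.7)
The strategy is to construct an explicit inverse morphism $\tau\colon J_1\to V$ via extension classes and to verify that $\sigma\circ\tau=\id_{J_1}$ and $\tau\circ\sigma=\id_V$. First, I would define $\tau$ as follows. For $\sL'\in J_1$ put $\sL'':=L\otimes(\sL')^{-1}\in\Pic^0(X)$. Riemann--Roch applied to $\sL''\otimes(\sL')^{-1}$, of degree $-1$, gives $\dim\Ext^1(\sL',\sL'')=g$, so $\pp(\Ext^1(\sL',\sL''))\cong\pp^{g-1}$ parametrizes nontrivial extensions $0\to\sL''\to E\to\sL'\to 0$ with $\det E=L$. Such $E$ is stable: any line subbundle $\sL\hookrightarrow E$ either factors through $\sL''$, in which case $\deg\sL\le 0<1/2$, or maps injectively to $\sL'$, in which case $\deg\sL\le 1$ and equality forces a splitting. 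A standard argument then shows the resulting morphism $\pp^{g-1}\to M$ is a closed embedding with image a ruling $\tau(\sL')\in V$, and carrying this construction out in flat families over $J_1$ using the Poincar\'e bundle on $X\times J_1$ realizes $\tau$ as a morphism of varieties.

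Second, $\sigma\circ\tau=\id_{J_1}$ is immediate from the construction: the ruling $\tau(\sL')$ is tautologically the family of extensions with degree-$1$ factor $\sL'$, so $\sigma$ extracts $\sL'$. For $\tau\circ\sigma=\id_V$, I would invoke the canonical extension $0\to N'_l\to\sE_{1,l}\to N''_l\to 0$ on $l\times X$ supplied by the paragraph preceding the theorem; restricting to any $p\in l$ presents each parametrized bundle as an extension whose degree-$1$ factor on $X$ is $\sL'_l$, independent of $p$. Hence $l\subseteq\tau(\sL'_l)$ inside $M$. Since both are irreducible, closed, $(g-1)$-dimensional subvarieties of $M$, they must coincide, so $l=\tau(\sigma(l))$.

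The main obstacle is justifying the input used in the second identity, namely that every $\pp^{g-1}\subset M$ arises as the image of a fiber of $\psi\colon N\to M$: equivalently, that across any such ruling the parametrized rank-$2$ bundles share a common pair of line bundles on $X$ fitting them into a uniform extension structure. For $g=2$ this is Newstead's classical count of four rulings through each point (Proposition~\ref{prop_g=2_N}). For $g\ge 3$ it requires a separate argument: one restricts the universal bundle $\sE_1$ to $l\times X$ and, using the constancy of the Harder--Narasimhan type in an irreducible family together with cohomology-and-base-change, extracts a universal degree-$0$ line subbundle, producing the desired global extension on $l\times X$ and thereby pinning down the ruling as a fiber of $\psi$.
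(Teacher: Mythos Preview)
Your overall plan---build an inverse $\tau$ from extensions and check the two compositions---is sound and different in spirit from the paper's proof, which instead invokes Mu\~noz's theorem that $\psi$ induces an isomorphism $\overline{M}_{0,n}(N,1)\stackrel{\sim}{\to}\overline{M}_{0,n}(M,1)$. From that, the paper argues: every line in $M$ is the image of a line in $N$; lines in $N$ lie in fibers of $N\to J$ (the base is an abelian variety); hence every linear $\pp^{g-1}\subset M$, being swept out by such lines, must be the image of a single fiber of $\psi$, and $V\cong J$. So the key step you correctly isolate---that every $\pp^{g-1}\subset M$ is a fiber of $\psi$---is handled in the paper by this external Gromov--Witten/stable-maps input, not by a direct bundle-theoretic argument.

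There is, however, a genuine gap in how you propose to handle that step. You suggest extracting a universal degree-$0$ line subbundle of $\sE_{1}|_{l\times X}$ via ``constancy of the Harder--Narasimhan type in an irreducible family.'' But the bundles parametrized by $l$ are \emph{stable} on $X$, so their Harder--Narasimhan filtration is trivial (just $0\subset E$) and yields no line subbundle at all. What you need is a maximal line subbundle of degree $0$, and for a general stable rank-$2$ bundle of odd degree there are $2^g$ of these, with no canonical choice; in families they can and do monodromize. So neither HN theory nor cohomology-and-base-change alone will produce the global extension $0\to N'_l\to\sE_{1,l}\to N''_l\to 0$ you want to invoke. (The paragraph preceding the theorem asserts this extension, but its existence on all of $\widetilde{V}\times X$ is itself the nontrivial content, essentially equivalent to the theorem.) Without an independent argument here---for instance, the paper's route through the classification of lines, or a careful analysis of how the restriction of the ample generator to $l$ forces $l$ to come from $N$---your proof of $\tau\circ\sigma=\id_V$ is incomplete.
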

\begin{proof}
From the morphism $\psi: N\longrightarrow M=M_{2,L}(X)$,  let $\overline{M}_{0,n}(N, 1)$ and $\overline{M}_{0,n}(M, 1)$ be the moduli spaces of stable maps from genus zero curves with $n$-marked points to the variety $N$ and $M$, then \cite[Corollary 6]{Munoz} proved that these two spaces are isomorphic. 
This means that the lines in $M$ can be understood as the lines in $N$.   The lines can only be in the fibers of the projective bundle $N$, which forces that the projective spaces $\pp^{g-1}$ lie in $M$.  From the dimensions the lines (inside the projective spaces $\pp^{g-1}$) in $M$ can not cover the whole $M$, but the variety of projective spaces $\pp^{g-1}$
must be equal to $J=J(X)$. 
\end{proof}
\begin{rmk}
It is interesting to have a direct proof of Theorem \ref{thm_map_sigma} by generalizing the proof in the case of $g=2$ in \cite{Newstead}. 
\end{rmk}

\subsection{Invariants}\label{subsec_invariants}

The finite abelian group $\Gamma$ also acts on $N$ by tensor product.  We have 
\begin{prop}\label{prop_N_M_equivariant}
There is a morphism $\psi: [N/\Gamma]\to [M/\Gamma]=M^{\PGL_2}$ between quotient stacks. 
\end{prop}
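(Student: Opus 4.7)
The plan is to show that $\psi: N \to M$ is $\Gamma$-equivariant with respect to the tensor-product actions on both sides; once equivariance is established, the universal property of quotient stacks immediately yields the desired morphism $[\psi]: [N/\Gamma] \to [M/\Gamma] = M^{\PGL_2}$.

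First I would make explicit the action of $\Gamma$ on $N$. On $J = \Pic^0(X)$ the action is translation $t_\kappa(\sL) = \sL \otimes \kappa$, and the Poincaré bundle $\tL$ on $X \times J$ satisfies $(1_X \times t_\kappa)^* \tL \cong \tL \otimes \pr_X^* \kappa$ up to a line bundle pulled back from $J$. Using that $\kappa^2 \cong \sO_X$, I would compute
$$(t_\kappa)^* \tE \;=\; R^1 p_*\bigl((1_X \times t_\kappa)^*(\tL^{2} \otimes L^{-1})\bigr) \;\cong\; R^1 p_*(\tL^{2} \otimes \pr_X^* \kappa^{2} \otimes L^{-1}) \;\cong\; \tE \otimes \sM_\kappa$$
for some line bundle $\sM_\kappa$ on $J$. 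Since a twist by a line bundle from $J$ is absorbed under projectivization, the translation action on $J$ lifts canonically to an action on $N = \pp(\tE^\vee)$.

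Next I would verify $\Gamma$-equivariance of $\psi$ pointwise. A point $n \in N$ lying over $\sL \in J$ corresponds to a nontrivial extension class yielding a stable bundle $E$ fitting in
$$0 \to \sL \to E \to L \otimes \sL^{-1} \to 0.$$
Tensoring by $\kappa$ is exact and produces $0 \to \sL \otimes \kappa \to E \otimes \kappa \to L \otimes \sL^{-1} \otimes \kappa \to 0$; since $\kappa^2 \cong \sO_X$, this is an extension of $L \otimes (\sL \otimes \kappa)^{-1}$ by $\sL \otimes \kappa \in J$, with $\det(E \otimes \kappa) = L$, so it is a legitimate element of $N$ lying over $\sL \otimes \kappa$. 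Under the lifted $\Gamma$-action on $N$ constructed above, this tensored extension represents precisely the point $\kappa \cdot n$, while $\psi(n) \otimes \kappa = E \otimes \kappa = \kappa \cdot \psi(n)$, giving $\psi(\kappa \cdot n) = \kappa \cdot \psi(n)$.

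The main subtlety I anticipate is the bookkeeping around the Poincaré bundle, which is defined only up to twisting by the pullback of a line bundle from $J$; the argument depends crucially on working with $\pp(\tE^\vee)$ rather than $\tE^\vee$ itself, so that such twists are absorbed and the $\Gamma$-action on $N$ is canonical. Once $\psi$ is shown to be $\Gamma$-equivariant, the descent to a morphism of quotient stacks is then formal.
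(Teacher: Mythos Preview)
Your proposal is correct and follows the same approach as the paper: the paper's proof is the single sentence ``It suffices to check that the morphism $\psi: N\to M$ is $\Gamma$-equivariant, which is clear from the description of objects in $N$ as extensions.'' You have simply unpacked this assertion in full detail, making explicit the lift of the $\Gamma$-action to $N$ via the Poincar\'e bundle and verifying equivariance pointwise using $\kappa^2\cong\sO_X$.
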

\begin{proof}
It suffices to check that the morphism $\psi: N\to M$ is $\Gamma$-equivariant, which is clear from the description of objects in $N$ as extensions.
\end{proof}

Let $\overline{M}_{0,3}([N/\Gamma],e)$ be the moduli stack of stable maps $f: (\sC,\Sigma_i)\to [N/\Gamma]$ such that 
\begin{enumerate}
\item $(\sC, \Sigma_i)$ is a twisted prestable genus zero curve with three marked gerbes $\Sigma_i\subset \sC$;
\item $f: \sC\to [N/\Gamma]$ is stable map of degree $e=f^*(K_N)$, where $K_N$ is the canonical line bundle of $N$;
\end{enumerate}
The stack $\overline{M}_{0,3}([N/\Gamma],e)$ has a decomposition into (unions of) connected components
$$\overline{M}_{0,3}([N/\Gamma],e)=\bigsqcup_{(\kappa_1, \kappa_2, \kappa_3)\in \Gamma^3}\overline{M}_{0,3}([N/\Gamma],e)^{(\kappa_1, \kappa_2, \kappa_3)}$$
where $\overline{M}_{0,3}([N/\Gamma],e)^{(\kappa_1, \kappa_2, \kappa_3)}$ is the locus of maps 
such that the evaluation maps $\ev_i: \overline{M}_{0,3}([N/\Gamma],e)\to I[N/\Gamma]$ have images contained in $[N^{\kappa_i}/\Gamma]$.
The virtual dimension of $\overline{M}_{0,3}([N/\Gamma],e)^{(\kappa_1, \kappa_2, \kappa_3)}$ is
\begin{equation}\label{eqn_vd_stable_map_N}
\vd=(2g-1-3)+3+ge-\sum_{i=1}^{3}\age(\kappa_i).
\end{equation}

We relate the moduli space of stable maps to the moduli space of stable bundles on surfaces.  
Let $\sC$ be a stacky $\pp^1$, i.e., a stacky curve $\sC$ with coarse moduli space $\pp^1$. 
We are mostly interested in the case that $\sC=\pp(1,2)$ and $\pp_{2,2}$.  Note that 
$\pp_{2,2}$ is not the weighted projective line $\pp(2,2)$ (which is $\mu_2$-gerbe over $\mathbb{P}^1$), but the orbifold $\pp^1$ with two stacky points $B\mu_2$.

Let $$\sS:=X\times\sC.$$ 
The surface $\sS$ has irregularity $q=g\ge 2$, geometric genus $p_g=0$, and canonical line  bundle $K_{\sS}=(\int_{\sC}K_{\sC})[X]+(2g-2)[\sC]$. Consider the line bundle $\hat{L}:=L\otimes \sO_{\pp^1}(1)$ on $\sS$. Then 
$$c_1(\hat{L})=[\pp^1]+[X].$$
The ample cone of $\sS=X\times \sC$ is 
$$\{a[\pp^1]+b[X]\on a, b >0\}.$$
Let $H=[X]+t[\pp^1]$ with $t<<0$ be a polarization close to $[X]$.  
We let $c_2=1$ and $$\MM:=\sM^{H}(c_1, c_2)$$ be the moduli space of $H$-stable rank $2$ bundles  on $\sS$ with topological invariants $c_1, c_2$. 
We have a similar result below.

\begin{prop}\label{prop_stable_bundle_M}(see \cite[Proposition 2]{Munoz})
The moduli space $\MM$ is a projective bundle $\MM=\pp(\tE_{\zeta}^{\vee})\to J$ over the Jacobian $J=J(X)$ of $X$, where $\tE_{\zeta}$ is a vector bundle on $J$ with usual Chern class 
$$\tilde{\Ch}(\tE_{\zeta})=N+8\omega.$$
The universal bundle $\scrV\to \sS\times \MM$ is given by 
$$0\to \sO_{\sC}(1)\otimes \tL\otimes \lambda\longrightarrow \scrV\longrightarrow L\otimes \tL^{-1}\to 0,$$
where $\lambda$ is the tautological line bundle for the projective bundle. 
\end{prop}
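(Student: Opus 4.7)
The plan is to follow \cite[Proposition 2]{Munoz}, which treats the non-stacky case $\sC=\pp^1$; the argument adapts to the stacky setting with only minor modifications. The three ingredients are: (i) realizing every $V\in\MM$ as a non-split extension of a specific shape, (ii) packaging the extension data into a vector bundle on $J$ and identifying $\MM$ with its projectivization, and (iii) computing $\tilde\Ch(\tE_\zeta)$ by Grothendieck--Riemann--Roch.

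For (i), I would first argue that every $V\in\MM$ fits in a non-split extension
\begin{equation}\label{eqn_extV_proposal}
0\to p_X^*\sL\otimes p_\sC^*\sO_\sC(1)\to V\to p_X^*(L\otimes\sL^{-1})\to 0
\end{equation}
for a uniquely determined $\sL\in J$. The restriction $V|_{\{x\}\times\sC}$ has rank $2$ and degree $1$; since $H=[X]+t[\pp^1]$ is close to $[X]$, slope inequalities applied to candidate destabilizing sub-line-bundles $\sF=p_X^*A\otimes p_\sC^*B$ force the generic $\sC$-splitting type to be $\sO_\sC\oplus\sO_\sC(1)$ and pin down the numerical type of the sub-line-bundle in (\ref{eqn_extV_proposal}). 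Setting $\sL:=\bigl(p_{X*}(V\otimes p_\sC^*\sO_\sC(-1))\bigr)^{\vee\vee}$ (which has degree $0$ by a Chern-class check), the adjunction map is the required inclusion, and the quotient is forced to be $p_X^*(L\otimes\sL^{-1})$ by the determinantal constraint. Non-splitness follows from $H$-stability, and conversely every non-split extension of the form (\ref{eqn_extV_proposal}) yields an $H$-stable $V$ by a direct slope argument valid for $t>0$ small.

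For (ii), I would package the family over $J$ using the Poincar\'e line bundle $\tL\to X\times J$. Define
$$\tE_\zeta:=R^1 (p_J)_*\!\left(p_X^*(\tL^{\otimes 2}\otimes L^{-1})\otimes p_\sC^*\sO_\sC(1)\right)$$
on $J$, where $p_X,p_\sC,p_J$ are the three projections from $X\times J\times\sC$. Since $\deg(\sL^{\otimes 2}\otimes L^{-1})=-1<0$ on each fibre of $p_J$, base change gives $R^0(p_J)_*=0$ and shows $\tE_\zeta$ is a vector bundle whose fibre at $[\sL]$ equals the extension group on the right-hand side of (\ref{eqn_extV_proposal}). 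The standard universal extension on $\pp(\tE_\zeta^\vee)\to J$, pulled back to $\sS\times\pp(\tE_\zeta^\vee)$ and twisted by the tautological line bundle $\lambda$, produces a rank-$2$ bundle $\scrV$ fitting the sequence in the statement; by (i) this identifies $\pp(\tE_\zeta^\vee)\cong\MM$ together with its universal bundle.

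For (iii), I would compute $\tilde\Ch(\tE_\zeta)$ via Grothendieck--Riemann--Roch applied to $p_J$. The essential input is the identity $c_1(\tL)^2=-2[X]\otimes\omega$ on $X\times J$; squaring the factor $2$ in $\tL^{\otimes 2}$ produces $-8[X]\otimes\omega$, and integration over $X\times\sC$ using $\chi(\sC,\sO_\sC(1))=2$ together with Riemann--Roch on $X$ for $\sL^{\otimes 2}\otimes L^{-1}$ recovers the coefficient $8$ in front of $\omega$; the rank $N$ is read off as the generic dimension of the extension group. I expect the main obstacle to be the stability analysis in (i): ruling out spurious destabilizing sub-line-bundles of $V$ as $t\to 0^+$ requires a careful case analysis, and one must separately verify that pushforwards along $\sC$-fibres behave in the stacky case as they do in the smooth $\pp^1$ case; the GRR computation in (iii) is essentially mechanical once $\tE_\zeta$ is in hand.
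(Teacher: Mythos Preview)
Your overall strategy is sound and your steps (ii) and (iii) match the paper closely. The main difference is in step (i): the paper does not carry out a direct slope analysis of sub-line-bundles of $V$. Instead it invokes wall-crossing on the ruled (stacky) surface $\sS=\sC\times X$: for a polarization $H_0$ close to $[\sC]$ the moduli space $\sM^{H_0}(c_1,c_2)$ is empty by Qin's result, and there is a single wall $\zeta=-[\pp^1]+[X]$ separating $H_0$ from $H$, so that $\MM=\sM^H(c_1,c_2)$ is produced entirely by the wall-crossing term, which is exactly the projectivized $\Ext^1$ bundle $\pp(\tE_\zeta^\vee)$. This bypasses the case analysis you flag as the ``main obstacle'' and gives the extension shape for free; your direct approach would also work but is more laborious.

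The second, more substantive, divergence is in the computation of the rank $N$ in the stacky cases, which is really the new content of the proposition beyond \cite{Munoz}. You leave $N$ as ``the generic dimension of the extension group'' and mention that pushforwards along $\sC$-fibres need checking; the paper actually performs this check via the orbifold Riemann--Roch formula, summing the contributions from the twisted sectors of the inertia stack $I\sS$. For $\sC=\pp(1,2)$ one obtains $N=g$, and for $\sC=\pp_{2,2}$ one gets $N=0$ (so $\MM\cong J$); note also that for $\pp_{2,2}$ the paper twists by $\sO_\sC(2)$ rather than $\sO_\sC(1)$ in the extension, a point your uniform formulation with $\sO_\sC(1)$ does not accommodate. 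Your GRR sketch for the $8\omega$ term uses $\chi(\sC,\sO_\sC(1))=2$, which is specific to $\sC=\pp^1$; in the stacky cases the Euler characteristic and the twist change, and the inertia contributions are what make the answers come out correctly. Without these orbifold computations your argument establishes the $\pp^1$ case (recovering Mu\~noz) but does not yet prove the proposition for the stacky $\sC$ that the paper needs.
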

\begin{proof}
In the case that $\sC=\pp^1$, this is \cite[Proposition 2]{Munoz}. Mu\~noz \cite{Munoz} used wall crossing for the moduli space of stable bundles on the surface $\pp^1\times X$ and found that $\MM$ is the projective bundle.  For the polarization $H_0$ on $\sS=\sC\times X$ close to $\sC$, the moduli space $\sM^{H_0}(c_1, c_2)$ of $H_0$-stable bundles on $\sS$ is empty. This is a result in \cite{Qin}. There is only one wall given by $\zeta=-[\pp^1]+[X]$ (we understand here that $\zeta=2[X]-\hat{L}=[X]-c_1(\Lambda)$ as a divisor). 
Therefore, the moduli space $\MM=\sM^{H}(c_1, c_2)$ of $H$-stable bundles on $\sS$ is obtained by the wall crossing. Let $F$ be a divisor such that $2F-\hat{L}\equiv \zeta$, and $\scrF\to \sS\times J$ be the universal bundle parametrizing divisors homologically equivalent to $F$. Then $\scrF=\tL\otimes \sO_{\pp^1}(1)$. Let 
$$\pi: \sS\times J\to J$$
be the projection. Then \cite[Proposition 2]{Munoz} proved that 
$\MM=\pp(\tE_{\zeta}^{\vee})$, where 
$$\tE_{\zeta}=\tE xt^1(\sO(\hat{L}-F), \sO(\scrF)=R^1\pi_{*}(\sO(\zeta)\otimes \tL^2).$$
The moduli space $\MM$ is the set of vector bundles $E$ the extensions
$$0\to \sO_{\pp^1}(1)\otimes \sL\longrightarrow E\longrightarrow \Lambda\otimes \sL^{-1}\to 0$$
for $\sL\in J$.  The dimension of the fiber, calculated by the Riemann-Roch formula, is $2g-1$ for $\sC=\pp^1$. 

In the case of stacky $\pp^1$, the proof of the construction of the moduli spaces exactly generalizes. We only need to calculate the dimension of the fiber of the projective bundle using Riemann-Roch formula for the stack $\sS=\sC\times X$. 

\textbf{Case $\sC=\pp(1,2)$:}
Then $\sS=\pp(1,2)\times X$.  In this case the inertia stack 
$$I\sS=\sS\sqcup B\mu_2\times X,$$
where $B\mu_2$ corresponds to the single stacky point in $\pp(1,2)$. Let $c(\Lambda)=1+x$, and $c(\sO_{\sC}(1))=1+h$. 
We have
\begin{align*}
-N&=\chi(\Lambda^{-1}\otimes \sL^2\otimes \sO_{\sC}(1))\\
&=\int_{\sC\times X}\Ch(\Lambda^{-1}\otimes \sL^2\otimes \sO_{\sC}(1))\cdot \Td_{\sC\times X}\\
&+\int_{B\mu_2\times X}e^{2\pi i \frac{1}{2}}\Ch(\Lambda^{-1}\otimes \sL^2\otimes \sO_{\sC}(1))
\cdot \Td_{\sC\times X}|_{B\mu_2\times X}\\
&=\int_{\sC\times X}e^{-x}\cdot e^{h}\cdot (1+\frac{3h}{2}+\cdots)(1+(1-g)x+\cdots)\\
&+\int_{B\mu_2\times X}-(1-x+\cdots)(1+h+\cdots)\cdot \frac{1}{1+e^{-h}}\cdot \frac{2h}{1-e^{-2h}}\cdot (1+(1-g)x+\cdots)\\
&=\int_{\sC\times X}(\frac{3}{2}(1-g)hx-\frac{3}{2}hx+(1-g)hx-hx)-\int_{B\mu_2\times X}(\frac{1}{2}(1-g)x-\frac{1}{2}x)\\
&=\frac{5}{4}(1-g)-\frac{5}{4}-\frac{1}{4}(1-g)+\frac{1}{4}=-g.
\end{align*}

\textbf{Case $\sC=\pp_{2,2}$:}
Then $\sS=\pp_{2,2}\times X$.  In this case the inertia stack 
$$I\sS=\sS\sqcup B\mu_2\times X\sqcup B\mu_2\times X,$$
where the two  $B\mu_2$ correspond to the two stacky points in $\pp_{2,2}$.  
Still let $c(\Lambda)=1+x$, and $c(\sO_{\sC}(1))=1+h$. 
In this case we choose $\sO_{\sC}(2)$ in the extension. 
We have
\begin{align*}
-N&=\chi(\Lambda^{-1}\otimes \sL^2\otimes \sO_{\sC}(2))\\
&=\int_{\sC\times X}\Ch(\Lambda^{-1}\otimes \sL^2\otimes \sO_{\sC}(2))\cdot \Td_{\sC\times X}\\
&+\int_{B\mu_2\times X}e^{2\pi i \frac{1}{2}}\Ch(\Lambda^{-1}\otimes \sL^2\otimes \sO_{\sC}(2))
\cdot \Td_{\sC\times X}|_{B\mu_2\times X}\\
&+\int_{B\mu_2\times X}e^{2\pi i \frac{1}{2}}\Ch(\Lambda^{-1}\otimes \sL^2\otimes \sO_{\sC}(2))
\cdot \Td_{\sC\times X}|_{B\mu_2\times X}\\
&=\int_{\sC\times X}e^{-x}\cdot e^{h}\cdot (1+\frac{4h}{2}+\cdots)(1+(1-g)x+\cdots)\\
&+\int_{B\mu_2\times X}-(1-x+\cdots)(1+h+\cdots)\cdot \frac{2h}{1-e^{-2h}}\cdot \frac{2h}{1-e^{-2h}}\cdot (1+(1-g)x+\cdots)\\
&+\int_{B\mu_2\times X}-(1-x+\cdots)(1+h+\cdots)\cdot \frac{2h}{1-e^{-2h}}\cdot \frac{2h}{1-e^{-2h}}\cdot (1+(1-g)x+\cdots)\\
&=\int_{\sC\times X}(2(1-g)hx-2hx+2(1-g)hx-2hx)-\int_{B\mu_2\times X}((1-g)x-x)-\int_{B\mu_2\times X}((1-g)x-x)\\
&=(1-g)-1-(1-g)+1=0.
\end{align*}
Therefore, when $\sC=\pp(1,2)$, $N=g$ and $\MM$ is projective $\pp^{g-1}$-bundle.  When $\sC=\pp_{2,2}$,  $N=0$ and $\MM$ is isomorphic to $J$. 
\end{proof}

The group $\Gamma=\text{Pic}^0(X)[2]$ acts on $\MM$ by tensor product.  
Our goal is to relate the Gromov-Witten invariants of $M^{\PGL_2}$ to the Donaldson invariants of $\sS=\sC\times X$. 

We first have the following result on Gromov-Witten invariants which is similar to a result in \cite{Harder-Narasimhan} on the cohomology.
\begin{prop}\label{prop_invariants_MPGL-nontwisted-sector}
Let $\alpha_1, \alpha_2,\alpha_3\in H^*(M/\Gamma)$ be three cohomology classes in $[M/\Gamma]$. We still denote $\alpha_1, \alpha_2,\alpha_3\in H^*(M/\Gamma)$ as the cohomology classes in $H^*(M)$ under the pullbacks of the natural morphism $\pi: M\to M/\Gamma$. Then we have 
$$|\Gamma|\cdot\langle \alpha_1, \alpha_2,\alpha_3\rangle^{[M/\Gamma]}_{0,3,e}=\langle \alpha_1, \alpha_2,\alpha_3\rangle^{M}_{0,3,e}.$$
\end{prop}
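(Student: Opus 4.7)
The plan is to realize $\overline{M}_{0,3}([M/\Gamma], e)^{(0,0,0)}$ as an étale quotient of $\overline{M}_{0,3}(M, e)$ by $\Gamma$, and then conclude by an integration change-of-variables formula. Composition with the quotient morphism $\pi:M\to[M/\Gamma]$ induces a natural morphism of stable-map stacks
\[
\Phi: \overline{M}_{0,3}(M, e) \longrightarrow \overline{M}_{0,3}([M/\Gamma], e),
\qquad (f:C\to M)\longmapsto (\pi\circ f:C\to [M/\Gamma]).
\]
Since the source curves of stable maps to $M$ are ordinary (non-stacky) prestable curves, the monodromies at the marked points of $\pi\circ f$ are trivial, so $\Phi$ factors through the non-twisted sector component $\overline{M}_{0,3}([M/\Gamma], e)^{(0,0,0)}$.

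Next I would show that $\Phi$ identifies its target with the quotient stack $[\overline{M}_{0,3}(M, e)/\Gamma]$, where $\Gamma$ acts on the source by post-composition with its action on $M$; this would make $\Phi$ a $\Gamma$-Galois étale cover of stacky degree $|\Gamma|$. The key point is that a stable map $\tilde f:\sC\to[M/\Gamma]$ in the non-twisted sector corresponds to a principal $\Gamma$-bundle $P\to \sC$ together with a $\Gamma$-equivariant morphism $P\to M$. Since $\Gamma$ is finite and the coarse curve is a tree of $\pp^1$'s, one has $H^1(\sC,\Gamma)=0$ and every such bundle is trivial. Choosing a trivialization of $P$ lifts $\tilde f$ to a stable map $\sC\to M$, and two trivializations differ by an element of $\Gamma$. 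Because $\pi$ is étale, the two perfect obstruction theories pull back correctly under $\Phi$, yielding
\[
\Phi^{*}\,[\overline{M}_{0,3}([M/\Gamma], e)^{(0,0,0)}]^{\vir} \;=\; [\overline{M}_{0,3}(M, e)]^{\vir}.
\]

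Finally, because $\ev_i^{[M/\Gamma]}\circ \Phi = \pi\circ \ev_i^{M}$, for classes $\alpha_i$ regarded in $H^{*}(M)$ via $\pi^{*}$ we have $\Phi^{*}(\ev_i^{*}\alpha_i) = \ev_i^{*}(\pi^{*}\alpha_i)$. The integration formula for a degree-$|\Gamma|$ étale cover of Deligne--Mumford stacks then gives
\[
\int_{[\overline{M}_{0,3}(M,e)]^{\vir}}\prod_{i=1}^{3}\ev_i^{*}\alpha_i \;=\; |\Gamma|\cdot \int_{[\overline{M}_{0,3}([M/\Gamma],e)]^{\vir}}\prod_{i=1}^{3}\ev_i^{*}\alpha_i,
\]
which is the claimed identity. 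The main technical obstacle lies in the second step: one must verify stack-theoretically that the $\Gamma$-action keeps track of automorphisms correctly, so that the stacky degree of $\Phi$ genuinely equals $|\Gamma|$ even on loci where some $\gamma\in\Gamma$ stabilizes a given stable map $f$. For generic maps this is transparent; at special points, the contributions from $B\Gamma_{0}$ factors on both sides cancel consistently in the integration formalism.
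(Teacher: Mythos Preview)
Your proposal follows essentially the same strategy as the paper: build the map $\Phi$ (called $\rho$ there), identify the target with $[\overline{M}_{0,3}(M,e)/\Gamma]$, and conclude via virtual pushforward/pullback and the $|\Gamma|$-to-$1$ integration formula. One point the paper handles more carefully than your sketch is the surjectivity step. A stable map $\tilde f:\sC\to[M/\Gamma]$ in the untwisted sector may a priori have a domain $\sC$ with \emph{stacky nodes} (balanced $\mu_r$-points at nodes), even though the three marked points are untwisted; your claim ``$H^1(\sC,\Gamma)=0$ because the coarse curve is a tree of $\pp^1$'s'' does not by itself exclude this (e.g.\ $\pp^1_{2,2}$ has coarse space $\pp^1$ but nontrivial orbifold $\pi_1$). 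The paper rules out stacky nodes by an inductive peeling argument: an end component of the tree is an orbifold $\pp^1$ with at most one stacky point, hence simply connected \cite{BN}, so the $\Gamma$-bundle over it is trivial; each piece of the pullback then maps representably to the scheme $M$, forcing that end component to be a scheme and the attaching node to be non-stacky. Iterating, $\sC$ is a scheme and $\tilde\sC\simeq\sC\times\Gamma$, giving the inverse to your $\Phi$. With this refinement, your argument matches the paper's proof.
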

\begin{proof}
We begin with constructing a morphism $$\rho: \overline{M}_{0,3}(M,e)\to \overline{M}_{0,3}([M/\Gamma],e).$$ In this Proposition we assume that marked points are non-stacky. Let $[f: C\to M]\in \overline{M}_{0,3}(M,e)$. Put $\tilde{C}:=C\times \Gamma$, where $\Gamma$ is considered as a discrete set. Define a moprhism $\tilde{f}: \tilde{C}\to M$ by $\tilde{f}|_{C\times \{\kappa\}}:=(\kappa\cdot)\circ f$. The $\Gamma$-action on $\tilde{C}$ given by $\kappa'\cdot (p,\kappa)=(p,\kappa'\kappa)$ makes the natural map $\tilde{C}\to C$ a trivial principal $\Gamma$-bundle. The morphism $\tilde{f}$ is $\Gamma$-equivariant. Taking stack quotient by $\Gamma$ yields a morphism $\hat{f}:C=[\tilde{C}/\Gamma]\to [M/\Gamma]$, which is an object of $\overline{M}_{0,3}([M/\Gamma],e)$. We define $\rho([f]):=[\hat{f}]$.

The group $\Gamma$ acts on $\overline{M}_{0,3}(M,e)$ by post composition. By construction, we have $\hat{((\kappa\cdot)\circ f)}=\hat{f}$ for any $\kappa\in \Gamma$. Therefore we obtain a morphism
\begin{equation}\label{eqn:mod_isom}
[\overline{M}_{0,3}(M,e)/\Gamma]\to \overline{M}_{0,3}([M/\Gamma],e).    
\end{equation}
We claim that (\ref{eqn:mod_isom}) is an isomorphism. Let $\hat{f}: \sC\to [M/\Gamma]$ be an object of $\overline{M}_{0,3}([M/\Gamma],e)$ and consider the cartesian diagram
\begin{equation}
\xymatrix{
\tilde{\sC}\ar[r]\ar[d] & M\ar[d]\\
\sC\ar[r]^{\hat{f}} & [M/\Gamma].
}    
\end{equation}
Since $\hat{f}$ is representable, the fiber product $\tilde{\sC}$ is a scheme. By assumption, $\sC$ is a tree all of whose irreducible components are genus $0$, and stacky points are only possible at nodes. An end $\sC_e\subset \sC$ of a branch of $\sC$ is thus an orbifold $\mathbb{P}^1$ with possibly one stacky point coming from the attaching node. Since such an orbifold $\mathbb{P}^1$ is simply connected \cite{BN}, it follows that the inverse image of $\sC_e$ in $\tilde{\sC}$ is the disjoint union of $|\Gamma|$ copies of $\sC_e$. Hence $\sC_e$ is a scheme, i.e. the attaching node is non-stacky. Removing $\sC_e$ and repeating this argument, we conclude that nodes of $\sC$ are all non-stacky, and $\sC$ is a scheme. Furthermore, $\tilde{\sC}\simeq \sC\times \Gamma$, and restricting the morphism $\tilde{\sC}\to M$ to any component $\sC\times \{\kappa\}$ yields an object of $\overline{M}_{0,3}(M,e)$. This yields the inverse of (\ref{eqn:mod_isom}) and shows that it is an isomorphism. 

Now we consider the following diagram
\begin{equation}
\xymatrix{
&\overline{M}_{0,3}(M,e)\ar[r]^{ev}\ar[d]\ar[dl]_{\rho} & M^{\times 3}\ar[d]\\
\overline{M}_{0,3}([M/\Gamma],e)&[\overline{M}_{0,3}(M,e)/\Gamma]\ar[r]^{\,\,\,\hat{ev}}\ar[l]^{\simeq} &[M/\Gamma]^{\times 3}.
}    
\end{equation}
The commutativity of the square follows from the above construction. Since the quotient map $M\to [M/\Gamma]$ pulls back the tangent bundle $T_{[M/\Gamma]}$ to the tangent bundle $T_M$, virtual pushforward implies 
\begin{equation}\label{eqn:vir_pushforward}
\rho_*[\overline{M}_{0,3}(M,e)]^{vir}=|\Gamma|[\overline{M}_{0,3}([M/\Gamma],e)]^{vir}    
\end{equation}
The Proposition follows.
\end{proof}


We now study $\overline{M}_{0,3}([M^{\PGL_2}],e)$ with stacky marked points. We only need to consider the case degree $e=1$ and one marked point is non-stacky.

\begin{lem}\label{lem_one_stacky_point}
In the moduli space $\overline{M}_{0,3}([M^{\PGL_2}],1)$, the case that the domain curve having just one stacky marked point does not exist. 
\end{lem}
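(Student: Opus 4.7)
The plan is to derive a contradiction from the global monodromy relation that any twisted stable map $f:(\sC,\Sigma_i)\to [M/\Gamma]$ imposes on its marked gerbes. First I would compose $f$ with the natural projection $[M/\Gamma]\to B\Gamma$ (equivalently, pull back the \'etale $\Gamma$-torsor $M\to [M/\Gamma]$ along $f$) to obtain a $\Gamma$-torsor $P\to\sC$. For each marked gerbe $\Sigma_i\simeq B\mu_{r_i}$, the restriction $P|_{\Sigma_i}$ is classified by a homomorphism $\sigma_i:\mu_{r_i}\to \Gamma$, which must be injective by the representability of $f$. Writing $\kappa_i\in\Gamma$ for the image of a generator of $\mu_{r_i}$ under $\sigma_i$, one has $\kappa_i=0$ at a non-stacky marking ($r_i=1$) and $\kappa_i\neq 0$ of order two at a $\mu_2$-stacky marking.

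Next I would invoke the classification of $\Gamma$-torsors on the genus zero nodal orbifold $\sC$. Because the coarse space $|\sC|$ is a tree of $\pp^1$'s, the orbifold fundamental group $\pi_1^{\mathrm{orb}}(\sC)$ is generated by small loops around the stacky marked gerbes and the stacky nodes, subject to local torsion relations and one global relation per irreducible component. Summing these component relations over all components cancels the node contributions: each stacky node appears on two adjacent components with monodromies that are mutual inverses, so in the $2$-torsion group $\Gamma=(\zz/2\zz)^{2g}$ its total contribution vanishes. One is left with the clean global constraint
$$\kappa_1+\kappa_2+\kappa_3=0 \quad \text{in } \Gamma,$$
which is precisely the composition rule already used for Chen--Ruan products in \S\ref{subsec_Chen-Ruan}.

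The lemma then follows at once: if exactly one marked point, say $\Sigma_1$, is stacky, then $\kappa_2=\kappa_3=0$ and the constraint forces $\kappa_1=0$, contradicting the injectivity of $\sigma_1:\mu_2\hookrightarrow \Gamma$. Hence no such stable map can exist. The main subtlety in executing this plan is verifying the balancing/monodromy relation at stacky nodes within the twisted stable map framework; this is standard in the Abramovich--Vistoli theory of twisted curves and admissible covers, but one may wish to spell it out by passing to the admissible $\Gamma$-cover $\widetilde{\sC}\to\sC$ associated with $P$ and checking that its ramification is balanced at each node. Note that the degree one hypothesis plays no role, so the same argument in fact excludes the single-stacky-marking stratum in $\overline{M}_{0,3}([M^{\PGL_2}],e)$ for every $e\geq 0$, leaving only the strata with $0$, $2$, or $3$ stacky markings.
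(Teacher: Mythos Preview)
Your argument is correct and essentially equivalent in spirit to the paper's, but the execution differs. The paper does not derive the global relation $\kappa_1+\kappa_2+\kappa_3=0$. Instead, it argues locally: if the domain has exactly one stacky marked point, then some irreducible component $\sC_0$ carries exactly one stacky special point (either the marking or a stacky half-node), so $\sC_0\simeq\pp(1,2)$. Citing Behrend--Noohi, $\pp(1,2)$ is simply connected, hence the principal $\Gamma$-bundle $D_0\to\sC_0$ obtained by pulling back $M\to[M/\Gamma]$ along $f|_{\sC_0}$ is trivial, i.e.\ $D_0\simeq\sC_0\times\Gamma$. Since $D_0$ is a scheme, this forces $\sC_0$ to be a scheme, contradicting the presence of a stacky point.

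Your approach packages the same fundamental-group obstruction globally, summing the per-component relations so that balanced node contributions cancel and only the marked-point monodromies survive. This is cleaner and, as you observe, immediately generalizes to arbitrary degree and arbitrary numbers of markings; it also makes transparent the link with the $\kappa_1\kappa_2\kappa_3=1$ rule already used in the Chen--Ruan discussion. The paper's route trades this generality for a pointwise topological input ($\pi_1^{\orb}(\pp(1,2))=1$) and thereby sidesteps the need to spell out the balancing condition at stacky nodes. Either argument is adequate here.
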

\begin{proof}
Let $[f:\sC\to [M^{\PGL_2}]]\in \overline{M}_{0,3}([M^{\PGL_2}],1)$. If $\sC$ has one stacky marked point, then it is straightforward to see that $\sC$ has an irreducible component carrying one stacky point, which may come from stacky nodes. Let $\sC_0\subset \sC$ be such a component. The representable morphism $f|_{\sC_0}:\sC_0\to [M^{\PGL_2}]=[M/\Gamma]$ yields the cartesian diagram
\begin{equation*}
\xymatrix{
D_0\ar[r]\ar[d] & M\ar[d]\\
\sC_0\ar[r] &[M/\Gamma],
}    
\end{equation*}
where $D_0$ is a scheme, $D_0\to \sC_0$ is a principal $\Gamma$-bundle, and $D_0\to M$ is $\Gamma$-equivariant. The (disconnected) principal $\Gamma$-bundle $D_0\to \sC_0$ is classified by a group homomorphism 
\begin{equation}\label{eqn:rep_pi1}
\pi^{orb}_1(\sC_0)\to \Gamma.    
\end{equation}
If $\sC_0$ has only one stacky point, then $\sC_0\simeq \mathbb{P}(1,2)$ is simply-connected \cite{BN}. Hence $D_0\to \sC_0$ is the trivial $\Gamma$-bundle, i.e. $D_0\simeq \sC_0\times \Gamma$, which forces $\sC_0$ to be a scheme and cannot have a stacky point. This is a contradiction.
\end{proof}

It remains to consider the case of two stacky marked points. Let $\overline{M}_{0,2}(M^{\PGL_2},1)^{\kappa,\kappa^\prime}\subset \overline{M}_{0,2}([M^{\PGL_2}],1)$ be the locus of maps with two stacky marked points. Since we consider degree $1$ maps and there are only two marked points, stability implies that there are two loci,
\begin{equation*}
\overline{M}_{0,2}(M^{\PGL_2},1)^{\kappa,\kappa^\prime}=\overline{M}_{0,2}([M^{\PGL_2}],1)^m \coprod \overline{M}_{0,2}([M^{\PGL_2}],1)^n.    
\end{equation*}
Here $\overline{M}_{0,2}([M^{\PGL_2}],1)^m$ parametrizes maps with smooth domains and $\overline{M}_{0,2}([M^{\PGL_2}],1)^n$ parametrizes maps of the form $f: \sC_0\cup \sC_1\to [M/\Gamma]$ where $f$ is of degree $1$ on $\sC_0\simeq \mathbb{P}^1$, $\sC_1\simeq \mathbb{P}^1_{2,2}$ carries the two stacky marked points, $f$ is constant on $\sC_1$, and $\sC_0, \sC_1$ meet at a non-stacky node.

\begin{lem}\label{lem_two_stacky_point}
The moduli space $\overline{M}_{0,2}([M^{\PGL_2}],1)^m$ is smooth with expected dimension, and the moduli space $\overline{M}_{0,2}([M^{\PGL_2}],1)^n$ is also smooth and contributes zero to the Gromov-Witten invariants. 
\end{lem}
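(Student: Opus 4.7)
The strategy treats the two strata separately: for the smooth-domain stratum I identify the moduli with a surface bundle moduli from Proposition \ref{prop_stable_bundle_M}; for the nodal stratum I establish smoothness by a fibered-product description and zero contribution by a standard node-smoothing obstruction argument.

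For $\overline{M}_{0,2}([M^{\PGL_2}],1)^m$, the domain must be a smooth twisted $\pp^1$ with two order-$2$ stacky marked points, hence $\sC\simeq \pp_{2,2}$. A representable degree-$1$ map $\pp_{2,2}\to [M/\Gamma]$ is equivalent, via pullback of the universal projective bundle on $X\times M^{\PGL_2}$ together with the induced $\mu_2$-characters at the stacky points, to a rank-$2$ stable bundle on the surface $\sS=\pp_{2,2}\times X$ with the invariants computed in the proof of Proposition \ref{prop_stable_bundle_M}. By the $\sC=\pp_{2,2}$ case of that proposition, the associated moduli $\MM$ is isomorphic to $J$, a smooth abelian variety of dimension $g$; the virtual dimension formula, adapted from (\ref{eqn_vd_stable_map}) to two stacky marked points each of age $g-1$, also evaluates to $g$, so this stratum is smooth of expected dimension. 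The most delicate aspect is the stack-theoretic identification: one must verify that the correspondence matches the $\Gamma$-equivariant structures and intertwines the obstruction theories, so that Proposition \ref{prop_stable_bundle_M} controls the Deligne-Mumford structure and not merely the underlying coarse variety.

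For $\overline{M}_{0,2}([M^{\PGL_2}],1)^n$, I would realize the stratum as a fibered product along the non-stacky node evaluation,
\[
\overline{M}_{0,2}([M^{\PGL_2}],1)^n\;\simeq\; \overline{M}_{0,1}(M^{\PGL_2},1)\times_{M^{\PGL_2}}[M^\kappa/\Gamma],
\]
where the first factor records the degree-$1$ map from $\sC_0\simeq \pp^1$ with the node as marked point, and the second records the constant image of $\sC_1\simeq \pp_{2,2}$ inside the fixed locus $[M^\kappa/\Gamma]=[\Prym(X^\prime/X)/\Gamma]$. The first factor is smooth by the projective bundle description $N\to J$ of lines in $M$ from \S\ref{subsec_projective_bundle}, the embedding $[M^\kappa/\Gamma]\hookrightarrow M^{\PGL_2}$ is regular (since $M^\kappa$ is a smooth abelian variety), and the evaluation map is generically transverse to it, giving smoothness of the fibered product.

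The vanishing of the contribution then follows from the node-smoothing obstruction argument: the virtual class of the ambient moduli $\overline{M}_{0,2}([M^{\PGL_2}],1)$, when restricted to the nodal stratum, acquires a factor equal to the Euler class of the line bundle $T_{p_0}\sC_0\boxtimes T_{p_1}\sC_1$ of infinitesimal node smoothings, whose first Chern class is $\psi_{p_0}+\psi_{p_1}$. On this stratum, however, the pointed twisted curves $(\sC_0,p_0)$ and $(\sC_1,p_1,\Sigma_1,\Sigma_2)$ are rigid (their pointed-curve moduli are single points), so the cotangent lines at the node are pulled back from a point and are trivial; hence $\psi_{p_0}=\psi_{p_1}=0$ and the Euler class vanishes, forcing the contribution of $\overline{M}_{0,2}([M^{\PGL_2}],1)^n$ to every Gromov-Witten invariant to vanish.
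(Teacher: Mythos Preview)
Your approaches for both strata differ from the paper's, and the $n$-stratum argument contains a genuine error.

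For the $m$-stratum the paper does not invoke Proposition~\ref{prop_stable_bundle_M}. Instead it lifts $f:\pp_{2,2}\to[M/\Gamma]$ along the $\Gamma$-cover $M\to[M/\Gamma]$, identifies each connected component $D_i\simeq\pp^1$ of the cover with a minimal rational curve in $M$, and proves smoothness by the direct computation $H^1(\sC,f^*T_{[M/\Gamma]})\simeq H^1(D_i,(\hat f|_{D_i})^*T_M)^{\zz_2}=0$, the last vanishing coming from \cite{MS_2009}. Your route via the surface-bundle moduli $\MM$ is essentially what the paper does \emph{later}, in Theorem~\ref{thm_invariants_MPGL}, but for the \emph{three}-pointed space; transporting it to two marked points requires accounting for the residual automorphisms $\Aut(\pp_{2,2},\Sigma_1,\Sigma_2)\simeq\cc^*$, which is why your dimension $g$ disagrees with the paper's expected dimension $g-1$. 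You also flag the matching of obstruction theories as ``the most delicate aspect'' without carrying it out, whereas the paper's $H^1=0$ computation settles smoothness outright.

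For the $n$-stratum your claim that $\psi_{p_0}=0$ is false. The cotangent line bundle on $\overline{M}_{0,1}(M^{\PGL_2},1)$ is \emph{not} pulled back from a moduli of pointed curves: even though every source curve is isomorphic to $(\pp^1,p)$, the universal curve over the map space is a generally nontrivial $\pp^1$-bundle, and its relative cotangent along the marked section is nonzero (already for pointed lines in a single fibre $\pp^{g-1}$ of $N\to J$ one has $\psi\neq 0$). So the Euler class of the node-smoothing line does not vanish and your argument does not close. The paper bypasses obstruction bundles entirely: it identifies the $n$-stratum with $[ev_\infty^{-1}(M^\kappa)/\PGL_2(\cc)]$, computes its actual dimension to be $g-2$, and observes that since this is strictly below the expected dimension $g-1$, the virtual class---a degree-$(g-1)$ homology class supported on a $(g-2)$-dimensional space---must vanish.
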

\begin{proof}
Let $[f:\sC\to [M^{\PGL_2}]]\in \overline{M}_{0,2}([M^{\PGL_2}],1)^m$. Then $\sC\simeq \mathbb{P}^1_{2,2}\simeq [\mathbb{P}^1/\mathbb{Z}_2]$ and we have the cartesian diagram
\begin{equation}\label{eqn:Gamma_bdle}
\xymatrix{
D\ar[d]\ar[r]^{\hat{f}} & M\ar[d]\\
\sC\ar[r]^{f} &[M/\Gamma],
}    
\end{equation}
In this case $\pi^{orb}_1(\sC)\simeq \mathbb{Z}_2$ and the $\Gamma$-bundle $D\to \sC$ corresponds to an injective group homomphism $\pi^{orb}_1(\sC)\to \Gamma$. The image of $\pi_1^{orb}(\sC)$ under this homomorphism is the subgroup $\langle\kappa\rangle\subset \Gamma$ generated by $\kappa$ (note that $\kappa\kappa'=1$). Put $\Gamma_0:=\Gamma/\langle \kappa\rangle$. Then (\ref{eqn:Gamma_bdle}) is composed of two cartesian diagrams:
\begin{equation}\label{eqn:Gamma_bdle2}
\xymatrix{
D\ar[d]\ar[r]^{\hat{f}} & M\ar[d]\\
\mathbb{P}^1\ar[r]\ar[d] & [M/\Gamma_0]\ar[d]\\
\sC\ar[r]^{f} &[M/\Gamma],
}        
\end{equation}
This implies that the scheme $D$ is a disjoint union $D=\coprod_{i=1}^{|\Gamma|/2} D_i$ and, for each connected component $D_i$, the map $D_i\to \sC$ is a $\mathbb{Z}_2$-cover and the composition $D_i\to\sC\to C$ with the coarse moduli space map $\sC\to C\simeq \mathbb{P}^1$ is ramified over the images of the two stacky marked points. By Riemann-Hurwitz, we have $D_i\simeq \mathbb{P}^1$. Since $f: \sC\to [M/\Gamma]$ is of minimal degree, the restriction of $\hat{f}: D\to M$ to $D_i$ defines a minimal rational curve in $M$. 

The arguments in Proposition \ref{prop_invariants_MPGL-nontwisted-sector} implies that 
\begin{equation*}
\overline{M}_{0,2}([M^{\PGL_2}],1)^m\simeq [\left(ev_0^{-1}(M^\kappa)\cap ev_\infty^{-1}(M^{\kappa'})\right)/\left(\text{PGL}_2(\mathbb{C})\times\Gamma_0\right)],    
\end{equation*}
where $ev_0, ev_\infty: \text{Hom}_1(\mathbb{P}^1,M)\to M$ are evaluation maps and $\text{PGL}_2(\mathbb{C})$ acts by reparametrizing the domain $\mathbb{P}^1$. By \cite[Theorem 3.1]{MS_2009}, $\text{Hom}_1(\mathbb{P}^1,M)$ is compact. Therefore $\overline{M}_{0,2}([M^{\PGL_2}],1)^m$ is compact. Its expected dimension is
$$(3g-3-3)+2+\frac{1}{2}\Theta\cdot c_1([M/\Gamma])-\age(\kappa_1)-\age(\kappa_2)=g-1.$$
Furthermore, we have 
\begin{equation}\label{eqn:vanishing_H1}
H^1(\sC, f^*T_{[M/\Gamma]})\simeq 
H^1(D, \hat{f}^*T_M)^\Gamma\simeq H^1(D_i, (\hat{f}|_{D_i})^*T_M)^{\mathbb{Z}_2}=0,    
\end{equation}
where the last equality uses the proof of \cite[Theorem 3.1]{MS_2009}. Hence $\overline{M}_{0,2}([M^{\PGL_2}],1)^m$ is smooth of expected dimension $g-1$.

A similar argument shows that 
\begin{equation*}
\overline{M}_{0,2}([M^{\PGL_2}],1)^n\simeq [\left(\text{Hom}_1(\mathbb{P}^1, M)\times_{ev_\infty, M^\kappa, id}M^\kappa\right)/\text{PGL}_2(\mathbb{C})]\simeq [ev_\infty^{-1}(M^\kappa)/\text{PGL}_2(\mathbb{C})]    
\end{equation*}
is smooth and compact. Its dimension is 
$$(3g-3-3)+\Theta\cdot c_1([M/\Gamma])-\text{codim}(M^\kappa\subset M)=3g-4-(2g-2)=g-2,$$
which is less than the expected dimension $g-1$ and does not contribute. 
\end{proof}

The forgetful morphism
\begin{equation*}
\mathsf{ft}:\overline{M}_{0,3}(M^{\PGL_2},1)^{0,\kappa,\kappa^\prime}\to \overline{M}_{0,2}(M^{\PGL_2},1)^{\kappa,\kappa^\prime}    
\end{equation*}
is the universal curve over $\overline{M}_{0,2}(M^{\PGL_2},1)^{\kappa,\kappa^\prime}$. Therefore
\begin{equation*}
\overline{M}_{0,3}(M^{\PGL_2},1)^{0,\kappa,\kappa^\prime}=\mathsf{ft}^{-1}(\overline{M}_{0,2}(M^{\PGL_2},1)^{m})\coprod\mathsf{ft}^{-1}(\overline{M}_{0,2}([M^{\PGL_2}],1)^n).    
\end{equation*}
Since
\begin{equation*}
\mathsf{ft}|_{\mathsf{ft}^{-1}(\overline{M}_{0,2}(M^{\PGL_2},1)^{m})}: \mathsf{ft}^{-1}(\overline{M}_{0,2}(M^{\PGL_2},1)^{m})\to \overline{M}_{0,2}(M^{\PGL_2},1)^{m}    
\end{equation*}
is a $\mathbb{P}_{2,2}$-fibration over the smooth and compact base $\overline{M}_{0,2}.(M^{\PGL_2},1)^{m}$, $\mathsf{ft}^{-1}(\overline{M}_{0,2}(M^{\PGL_2},1)^{m})$ is smooth and compact. Its dimension, which is equal to the expected dimension, is $g$.

Let us denote  by the moduli stack  
$\overline{M}_{0,3}([M^{\PGL_2}],1)^{m}:=\mathsf{ft}^{-1}(\overline{M}_{0,2}(M^{\PGL_2},1)^{m})$.

\begin{thm}\label{thm_invariants_MPGL}
Suppose that there are two nontrivial stacky points on the genus zero $3$-marked source twisted curve, then 
there is an isomorphism 
$$\Psi: \overline{M}_{0,3}([M^{\PGL_2}],1)^m\to \MM,$$
where $\MM$ is the moduli space in Proposition \ref{prop_stable_bundle_M}.
\end{thm}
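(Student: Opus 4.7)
The plan is to build $\Psi$ and its inverse via the universal bundle of $\MM$ from Proposition~\ref{prop_stable_bundle_M}, adapting Mu\~noz's non-stacky argument to the orbifold setting.

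First I would construct $\Psi^{-1}\colon \MM\to \overline{M}_{0,3}([M^{\PGL_2}],1)^m$. Given $V\in\MM$, regard $V$ as a family of rank $2$ bundles on $X$ parameterized by $\sC$ via the projection $\sS=\sC\times X\to\sC$. Using the extension description in Proposition~\ref{prop_stable_bundle_M}, at a non-stacky $c\in\sC$ the restriction $V|_{\{c\}\times X}$ is a stable rank $2$ bundle on $X$ with determinant $L$; composing with $M\to M^{\PGL_2}$ gives a morphism from the non-stacky locus of $\sC$ to $M^{\PGL_2}$. At the two stacky points $p_1,p_2\in\sC=\pp_{2,2}$, the $\mu_2$-equivariant orbifold structure of $V$ twists the evaluation into the sector $[M^\kappa/\Gamma]\subset IM^{\PGL_2}$, extending the morphism to a degree-one stable map carrying two stacky marked points of type $\kappa=\kappa^{-1}$. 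Specifying any third non-stacky marked point $p_3\in\sC$ then yields a point of $\overline{M}_{0,3}([M^{\PGL_2}],1)^m$.

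For $\Psi$ itself, given $(f\colon\sC\to[M/\Gamma],p_1,p_2,p_3)$, I would use the cartesian diagram from Lemma~\ref{lem_two_stacky_point} to obtain the $\Gamma$-cover $D=\coprod D_i$ together with the $\Gamma$-equivariant lift $\hat f\colon D\to M$. Each component $D_i\cong\pp^1$ is a minimal rational curve in $M$. Pulling back the universal rank-$2$ bundle $\sE\to M\times X$ gives a rank-$2$ bundle $\hat V_i=(\hat f|_{D_i}\times\id_X)^*\sE$ on $D_i\times X$. Since $\hat f|_{D_i}\circ\sigma$ equals the composition of $\hat f|_{D_i}$ with the tensor-by-$\kappa$ action on $M$, and since this action pulls the universal bundle back to $\sE\otimes(\text{proj}_X)^*\kappa$, one obtains the equivariance $(\sigma\times\id_X)^*\hat V_i\cong\hat V_i\otimes(\text{proj}_X)^*\kappa$. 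This is precisely the data needed to descend $\hat V_i$ along $\sigma$ to a rank-$2$ orbifold bundle $V_f$ on $[D_i/\zz_2]\times X=\sC\times X$, whose $\mu_2$-equivariant structure at $p_1,p_2$ is the $\kappa$-twist on $X$. The residual $\Gamma_0=\Gamma/\langle\kappa\rangle$-symmetry among the $D_i$ makes $V_f$ independent of the chosen component. By Proposition~\ref{prop_stable_bundle_M}, $V_f$ has the correct Chern classes and is $H$-stable for $H$ close to $[X]$, so $V_f\in\MM$.

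The two constructions are mutually inverse by the universal property of pullback: the classifying map of the universal bundle on $\MM$ recovers $V$, while pulling back the universal bundle on $[M/\Gamma]$ along a stable map followed by the equivariant descent reconstructs $V_f$. Both moduli spaces are smooth of the same dimension $g$ (Lemma~\ref{lem_two_stacky_point} together with the universal-curve formula, and Proposition~\ref{prop_stable_bundle_M} together with the $N=0$ computation giving $\MM\cong J$ when $\sC=\pp_{2,2}$), so checking that $\Psi$ is a bijection on closed points finishes the argument. I expect the main obstacle to be the $\zz_2$-equivariant descent in the forward direction: the involution on $D_i\cong\pp^1$ has two fixed points becoming the stacky points of $\sC$, but the descent combines this involution with tensoring by $\kappa$ on $X$, so that $V_f$ lives most naturally in a twisted category of coherent sheaves. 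Carefully identifying this twist with the extension presentation of $\scrV$ in Proposition~\ref{prop_stable_bundle_M}, which uses $\sO_\sC(2)$ in the sub line bundle, is the heart of the argument. The verification of $H$-stability (rather than merely fiberwise stability over $\sC$) via Mu\~noz-style wall crossing is a secondary technical point.
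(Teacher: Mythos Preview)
Your approach matches the paper's: construct $\Psi$ by taking the cartesian diagram (\ref{eqn:Gamma_bdle2}), pulling back the universal bundle from $M\times X$ to $D_i\times X$, using the $\zz_2$-equivariance coming from the $\kappa$-action to descend to a bundle on $\sC\times X=\sS$ with the correct Chern data, and then matching dimensions (both equal to $g$ for $\sC=\pp_{2,2}$). The paper's own argument does not construct $\Psi^{-1}$ explicitly---it concludes directly from smoothness and the dimension comparison---so your attempt to build the inverse and your identification of the equivariant descent as the technical crux are, if anything, more detailed than the paper's proof.
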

\begin{proof}
Under the condition of the theorem  the source twisted curve $\sC=\pp^1_{2,2}$.
Let $f: \sC\to M^{\PGL_2}=[M/\Gamma]$ be a stable map from  $\sC$ to $[M/\Gamma]$ of degree $1$. Then from the definition of the moduli stack $M^{\PGL_2}=[M/\Gamma]$, we have the diagram 
(\ref{eqn:Gamma_bdle}).
From the former arguments, 
$\pi^{orb}_1(\sC)\simeq \mathbb{Z}_2$ and  (\ref{eqn:Gamma_bdle}) is composed of two cartesian diagrams
in (\ref{eqn:Gamma_bdle2}).

From Diagram (\ref{eqn:Gamma_bdle2}), $D$ is $D=\coprod_{i=1}^{|\Gamma|/2} D_i$ and $D_i\simeq \mathbb{P}^1$. The map $f$ corresponds to principle $\Gamma$-bundle $D\to \sC$ and a stable map 
$\hat{f}$.  Since $D$ is a disjoint union, this induces a family of vector bundles over $\pp^1\times X$ which is $\zz_2$-equivariant.  Thus,  
since $\pp^1\to \sC$ is a double cover, this induces a 
family $F$ of vector bundles over 
$\sC\times X$. Its first Chern class $c_1=c_1(\hat{L})=[\sC]+[X]$, and $c_2=1$. The bundle $F$ is stable over any point $p\in \sC$ since $2$ and $c_1\cdot H$ are coprime.

 We use $\overline{M}_{0,3}(M^{\PGL_2},1)^{0,\kappa_1,\kappa_2}=\overline{M}_{0,3}([M^{\PGL_2}],1)^m$ to represent this component of the moduli space of stable maps. We have 
$$\dim(\overline{M}_{0,3}(M^{\PGL_2},1)^{0,\kappa_1, \kappa_2})=(3g-3-3)+3+\frac{1}{2}\Theta\cdot c_1([M/\Gamma])-\age(\kappa_1)-\age(\kappa_2)=g.$$

From Proposition \ref{prop_stable_bundle_M}, the moduli space $\MM$ is smooth, and is a projective bundle over the Jacobian $J=J(X)$. It has dimensions $3g-1$, and $g$ in the cases $\sC=\pp^1$ and $\pp_{2,2}$ respectively. Thus, we obtain the same moduli spaces. 
\end{proof}

\subsection{Calculations-non twisted sector}\label{subsec_GW_invariants}

Recall that we have the following construction of the cohomology class in $\MM$ by considering the Chern classes of the universal vector bundle. We recall the notations in the introduction. Recall that $\{\xi_1, \cdots, \xi_{2g}\}\in H^1(X,\zz)$ is a basis and $\{\xi^*_1, \cdots, \xi^*_{2g}\}$ is the dual basis in $H_1(X,\zz)$.
Then $\xi^*_i\xi^*_{i+g}=[X]\in H_2(X,\zz)$ for $1\le i\le g$. We have $\psi=\sum_{i=1}^{2g}\xi_i\otimes \psi_i$. The classes $\alpha, \beta, \psi_i$ are given as follows. Let $$\mu: H_*(X)\to H^{4-*}(\MM)$$
be the map given by $\mu(a)=-\frac{1}{4} p_1(\mathfrak{g}_{\sE})/a$. Here $\mathfrak{g}_{\sE}\to X\times \MM$ is the associated universal $SO(3)$-bundle and $p_1(\mathfrak{g}_{\sE})\in H^4(X\times \MM)$ is the first Pontrjagin class. Then we have the classes 
$$
\begin{cases}
\alpha=2\mu([X])\in H^2(\MM), \\
\psi_i=\mu(\xi_i^*)\in H^3(\MM), & 1\le i\le 2g,\\
\beta=-4\mu(x)\in H^4(\MM),& x\in H_0(X) \text{~the point class.}
\end{cases}
$$
In the introduction we defined these classes for the moduli space $M$, but it is also holds for the moduli space of stable bundles $\MM$. Using the projective bundle $N\to J$ defined before, \cite{Munoz} proved that the Gromov-Witten invariants of $M$ is the same as the Donaldson invariants for $\sC\times X$. From Proposition \ref{prop_invariants_MPGL-nontwisted-sector}, we have 

\begin{prop}\label{prop_GW_M}(\cite[Theorem 11]{Munoz})
Let the genus $g=g(X)>2$ and $\alpha^{n_1}\beta^{n_2}\gamma^{n_3}$ has degree $3g-1$. Then we have 
$$\langle\alpha^{n_1}, \beta^{n_2}, \gamma^{n_3}\rangle_{0,3,1}^{[M/\Gamma]}=
(-1)^{g-1}D_{\sS, H, c_1}((2[X])^{n_1}(-4 x)^{n_2}(\sum \xi_{i}^*\cdot \xi_{i+g}^*)^{n_3}),$$
where $D_{\sS, H, c_1}((2[X])^{n_1}(-4 x)^{n_2}(\sum \xi_{i}^*\cdot \xi_{i+g}^*)^{n_3})$ is the Donaldson invariant for the surface $\sS=\sC\times X$.
\end{prop}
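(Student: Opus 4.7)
The plan is to combine Proposition \ref{prop_invariants_MPGL-nontwisted-sector}, which reduces Gromov--Witten invariants on $[M/\Gamma]$ to Gromov--Witten invariants on $M$, with Mu\~noz's identification of the latter with Donaldson invariants on $\sS=\pp^1\times X$ (\cite[Theorem 11]{Munoz}).

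First, since $\alpha,\beta,\gamma\in H^*(M^{\PGL_2})$ all lie in the untwisted sector of $H^*_{\CR}(M^{\PGL_2})$ and are the pullbacks, under the quotient map $\pi:M\to [M/\Gamma]$, of the corresponding $\Gamma$-invariant classes on $M$, Proposition \ref{prop_invariants_MPGL-nontwisted-sector} applies with $\kappa_1=\kappa_2=\kappa_3=0$ and yields
\begin{equation*}
|\Gamma|\cdot\langle\alpha^{n_1},\beta^{n_2},\gamma^{n_3}\rangle_{0,3,1}^{[M/\Gamma]}=\langle\alpha^{n_1},\beta^{n_2},\gamma^{n_3}\rangle_{0,3,1}^{M}.
\end{equation*}
The degree condition $\deg(\alpha^{n_1}\beta^{n_2}\gamma^{n_3})=3g-1$ is exactly the virtual dimension of $\overline{M}_{0,3}(M,1)$, so both sides make sense as numerical invariants.

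Next I would recall Mu\~noz's argument on $M$ and check that it produces the claim. The key geometric input is the morphism $\psi:N\to M$ of (\ref{eqn_morphism_N_M}), whose $\pp^{g-1}$-fibers sweep out the family of minimal rational curves in $M$; after further identification with its universal family one obtains a description of $\overline{M}_{0,3}(M,1)$ in terms of $N$ and three copies of evaluation maps. On the other hand, by Proposition \ref{prop_stable_bundle_M} in the case $\sC=\pp^1$, the Donaldson moduli space $\MM=\sM^H(c_1,c_2)$ on $\sS=\pp^1\times X$ is a projective bundle over the same Jacobian $J$, of the same rank, obtained by the unique wall crossing at $\zeta=-[\pp^1]+[X]$. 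Restricting the universal bundle $\scrV\to\sS\times\MM$ to a ruling $\pp^1\times\{pt\}\subset\sS$ gives exactly the extensions defining $N$. This identifies the two projective bundles together with their universal families, and under this identification the Pontryagin-class construction $\mu$ used to define $\alpha,\beta,\gamma$ matches the Donaldson $\mu$-map: the insertions $2\mu([X]), -4\mu(x), \sum\xi_i^*\cdot\xi_{i+g}^*$ on $\MM$ match $\alpha,\beta,\gamma$ on $M$. The sign $(-1)^{g-1}$ arises, as in Mu\~noz, from comparing the natural orientations on the virtual and actual fundamental classes along the fiber direction of $N\to J$.

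The main obstacle is bookkeeping rather than conceptual: the factor of $|\Gamma|$ introduced in the first step must be absorbed into the normalization of the Donaldson invariants on the right-hand side (equivalently, Mu\~noz's identification on $M$ should produce $|\Gamma|$ times the Donaldson invariant with the normalization used here). One also has to ensure that the non-twisted-sector case really is governed by the smooth curve $\sC=\pp^1$ rather than a stacky $\pp^1$, which is consistent with Lemma \ref{lem_one_stacky_point} and the stability analysis in Lemma \ref{lem_two_stacky_point} showing that stacky contributions come only from paired twisted insertions. Once these normalizations are matched, the stated identity is a direct consequence of Proposition \ref{prop_invariants_MPGL-nontwisted-sector} and the Mu\~noz correspondence.
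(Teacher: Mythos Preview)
Your approach is essentially the same as the paper's: the paper simply prefaces the proposition with ``From Proposition \ref{prop_invariants_MPGL-nontwisted-sector}, we have'' and then cites \cite[Theorem 11]{Munoz}, adding afterwards only the computation of the sign via the Donaldson normalization $\epsilon_{\sS}(c_1)=(-1)^{(K_{\sS}c_1+c_1^2)/2}=(-1)^{g-1}$. Your concern about the $|\Gamma|$ factor is legitimate bookkeeping that the paper does not explicitly resolve either; and note that the paper attributes the sign $(-1)^{g-1}$ to the Donaldson orientation convention $\epsilon_{\sS}(c_1)$ rather than to an orientation comparison along the fibers of $N\to J$.
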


The Donaldson invariants $D_{\sS, H, c_1}((2[X])^{n_1}(-4 x)^{n_2}(\sum \xi_{i}^*\cdot \xi_{i+g}^*)^{n_3})$ is defined as follows. From the definition of the morphism $\mu: H_*(X)\to H^{4-*}(\MM)$, we have 
$$D_{\sS, H, c_1}((2[X])^{n_1}(-4 x)^{n_2}(\sum \xi_{i}^*\cdot \xi_{i+g}^*)^{n_3})=\epsilon_{\sS}(c_1)\int_{[\MM]}\alpha^{n_1}\cdot\beta^{n_2}\cdot\gamma^{n_3}.$$
Here the factor $\epsilon_{\sS}(c_1)=(-1)^{\frac{K_{\sS}c_1+c_1^2}{2}}=(-1)^{g-1}$ is from the calculation of Donaldson invariant.  

The invariant can further be reduced to the invariant 
$$(-1)^{g-1}\langle (4\omega+[X])^{n_1}([X]^{2})^{n_2}\gamma^{n_3}[X]^2, [J]\rangle$$ 
on the Jacobian $J=J(X)$ of $X$. Here $[X]^{2g-1+i}=\frac{(-8)^i}{i!}\omega^i$. 
Using this formula \cite{Munoz} found the quantum product for the quantum cohomology $QH^*(M)$ in Theorem \ref{thm_quan_coh_M}. 
Since the cohomology $H^*(M)\cong H^*(M/\Gamma)$ by the result of Harder-Narasimhan, and the dimension of the moduli space $\overline{M}_{0,3,1}([M/\Gamma])^{0,0,0}$ (where there is no stacky points in the domain curves) in Theorem \ref{thm_invariants_MPGL}, the invariants are the same as the Gromov-Witten invariants for $M$. 

\subsection{Calculations-twisted sector}\label{subsec_GW_invariants-twisted}

Now we do the calculation for the orbifold Gromov-Witten invariants involving the two cases $\sC=\pp(1,2)$, and $\sC=\pp_{2,2}$. 

We consider the class $\alpha\in H^2([M/\Gamma])$, and let $\kappa\in \Gamma$ be a nontrivial element so that $[M^{\kappa}/\Gamma]=[\Prym(X^\prime/X)/\Gamma]$. Let $0\le s\le 2(g-1)$ be an even integer. The cohomology $H^s(\Prym(X^\prime/X)^{\Gamma})$ of $[\Prym(X^\prime/X)/\Gamma]$ is generated by $\tiny\mat{c}2(g-1)\\s\rix$ generators.
We use $1_{\kappa}$ as the generator in $H^0(\Prym(X^\prime/X)^{\Gamma})$.
Let $r_{\kappa}^s:=\tiny\mat{c}2(g-1)\\s\rix$ and $R_{\kappa}:=\sum_{\substack{s=0\\
s \text{~even}}}^{2(g-1)}r_{\kappa}^s=2^{2g-3}$. We let 
$$1_{\kappa}, 1_{\kappa} h, \cdots, 1_{\kappa}h_{r_{\kappa}^2}, 1_{\kappa}h_{r_{\kappa}^2+1}, \cdots, 1_{\kappa}h_{r_{\kappa}^2+r_L^4}, \cdots, 1_{\kappa}h_{R_{\kappa}}$$
be the generators of the cohomology $H^{*}(\Prym(X^\prime/X)^{\Gamma})$.
We let $H^*_{I, \CR}([M/\Gamma])$ be the $\SP(2g,\zz)$ invariant Chen-Ruan cohomology. Then we have a presentation given in (\ref{eqn_Chen-Ruan_I})

\begin{equation*}
H^*_{I, \CR}([M/\Gamma])\cong \left(\qq[\alpha,\beta,\gamma]\oplus\bigoplus_{0\neq L\in\Gamma}\qq[1_{\kappa}, 1_{\kappa} h, \cdots, 1_{\kappa}h_{R_{\kappa}}]\right)/(I_g, I_{\orb})
\end{equation*}
where $I_{\orb}$ is the relations coming from the Chen-Ruan products in  \S \ref{subsec_Chen-Ruan}.

The cohomology group $H^s([M/\Gamma])=H^s(\Prym(X^\prime/X)/\Gamma)$ is generated by $r_{\kappa}^s:=\tiny\mat{c}2(g-1)\\s\rix$ generators. 

Let $h^s$ represent any class in $h_{(\sum_{i=0}^{s-2}r^i_{\kappa})+1}, \cdots, h_{(\sum_{i=0}^{s-2}r^i_{\kappa})+r_{\kappa}^s}$, i.e., any cohomology  class in $H^{s}(\Prym(X^\prime/X))$. 
We want to calculate $\alpha\cdot_{\bbQ}1_{\kappa} h^s$.
By definition, for any cohomology class $A\in H^*_{I, \CR}([M/\Gamma])$, we have 
$$(\alpha\cdot_{\bbQ}1_{\kappa} h^s, A)=\sum_{d\ge 0}\langle \alpha, 1_{\kappa} h^s, A\rangle_{0,3,d}^{[M/\Gamma]}\bbQ^{d}.$$

From dimension reason, only $d=0,1$ survive. For $d=0$, it is the classical Chen-Ruan product
$$\alpha\cup_{\CR}1_{\kappa} h^s=1_{\kappa}\alpha h^s\in H^{2+s}(M^{\kappa}/\Gamma).$$
Since the age is $\age(\kappa)=g-1$, this class $1_{\kappa}\alpha h^s\in H^{2+s}$ has orbifold degree $2+s+2(g-1)$. 

For $d=1$, we need to calculate $\langle \alpha, 1_{\kappa} h^s, A\rangle_{0,3,1}^{[M/\Gamma]}$ for all  $A\in H^*_{I, \CR}([M/\Gamma])$. 
First let  $A\in H^*([M/\Gamma])$, which is a class in the non-twisted sector.  Therefore, we need to consider the moduli space 
$\overline{M}_{0,3}([M/\Gamma], 1)^{0,\kappa,0}$, which has the (virtual) dimension 
$2g-1$ from Theorem \ref{thm_invariants_MPGL}.  
From Lemma \ref{lem_one_stacky_point}, this case contributes zero to the orbifold Gromov-Witten invariants. 

Second, let  $A\in H^*([M^{\kappa^\prime}/\Gamma])$, for $\kappa^\prime\in \Gamma$ another nontrivial element.  In this case we need to consider the moduli space $\overline{M}_{0,3}([M/\Gamma], 1)^{0,\kappa,\kappa^\prime}$, which has the (virtual) dimension $g$ from Theorem \ref{thm_invariants_MPGL}.  It is isomorphic to the Jacobian of $X$. This case only involves the stable maps from $\sC=\pp_{2,2}$.

We still have three evaluation maps:
$$
\begin{cases}
\ev_1: \MM\to [M/\Gamma];\\
\ev_2: \MM\to [M^{\kappa}/\Gamma]=[\Prym(X^\prime/X)/\Gamma];\\
\ev_3: \MM\to [M^{\kappa^\prime}/\Gamma]=[\Prym(X^\prime/X)/\Gamma].
\end{cases}
$$

In this case the evaluation maps $\ev_2: \MM\to [M^{\kappa}/\Gamma]=[\Prym(X^\prime/X)/\Gamma]$ and  $\ev_3: \MM\to [M^{\kappa^\prime}/\Gamma]=[\Prym(X^\prime/X)/\Gamma]$ are similar to the above case. The evaluation map $\ev_1: \MM\to [M/\Gamma]$ is actually $\ev_1: J(X)\to [M/\Gamma]$. From \cite{Munoz}, we have $\ev_1^*\alpha=\omega\in H^2(J)$ which is the symplectic form of $J$. 

We have the algebraic degrees
$$\deg(\alpha)=1;\quad  \deg(1_{\kappa}h^s)=(g-1)+\frac{1}{2}s,$$
so in order to make $\langle \alpha, 1_{\kappa} h^s, A\rangle_{0,3,1}^{[M/\Gamma]}$ non-zero,  $A$ must have degree 
$$\deg(A)=g-1-\frac{1}{2}s.$$
The pair $(\alpha\cdot_{\bbQ}1_{\kappa} h^s, A)$ is the integration on $[M^{\kappa^\prime}/\Gamma]$. 
So we take $A=1_{\kappa^\prime}\alpha^{g-1-\frac{1}{2}s}$, we have 
$$\int_{[M^{\kappa^\prime}/\Gamma]}\alpha^{\frac{1}{2}s}\cdot \alpha^{g-1-\frac{1}{2}s}=\constant.$$ 
Thus, we have 
\begin{equation}\label{eqn_key_calculation2}
(\alpha\cdot_{\bbQ}1_{\kappa} h^s, A)=(1_{\kappa^\prime}\alpha^{\frac{1}{2}s}, A)=\langle \alpha, 1_{\kappa} h^s, A\rangle_{0,3,1}^{[M/\Gamma]}
\end{equation}
up to a constant.

\subsection{The proof of Theorem \ref{thm_quantum_M-g>3}}

The quantum product of the class $\alpha$ with any cohomology class in 
$A\in H^*_{\CR}([M/\Gamma])$ is determined by the three-point invariant $\langle \alpha, A, B\rangle_{0,3,1}^{[M/\Gamma]}$ for 
$B\in H^*_{\CR}([M/\Gamma])$.
From Lemma \ref{lem_one_stacky_point}, the case that there is only one stacky marked point in the three marked points can not happen. 
So there are only case that these three marked points are non-stacky, and this is the case of non-stacky Gromov-Witten invariants in Proposition \ref{prop_invariants_MPGL-nontwisted-sector}. Thus, the relations (\ref{eqn_Quan_cohomology_relations-2-g>4}) and (\ref{eqn_Quan_cohomology_relations}) come from the quantum cohomology of $M$.  

The quantum orbifold products of $\alpha$ with the cohomology classes of twisted sectors are from the  key calculations in (\ref{eqn_key_calculation2}).  
Let $(\sC, p_1,p_2,p_3)\to [M/\Gamma]$ be a stable map of degree one. Then this is equivalent to giving a $\Gamma$-equivariant map 
$$\widetilde{\sC}\to M$$
where $\pi: \widetilde{\sC}\to \sC$ is a principal $\Gamma$-bundle over $\sC$. This covering map $\pi: \widetilde{\sC}\to \sC$ is determined by a morphism $\sigma: \pi_1(\sC)\to \Gamma$ from the orbifold fundamental group to $\Gamma$. In the case that there are two stacky points $p_2=B\mu_2, p_3=B\mu_2$, the two generators $\kappa, \kappa^\prime$ must satisfy the product to be $1$. Thus, $\kappa=\kappa^\prime$ in the calculation (\ref{eqn_key_calculation2}). 
Also the cohomology class $\alpha$ reduces to $h^2\in H^2([M^{\kappa}/\Gamma])$. 
$(h^2)^{\frac{1}{2}s}$ reduces one class $h^s$ in $H^s([M^{\kappa}/\Gamma])$.
Therefore, the quantum product calculations exactly match the relations in (\ref{eqn_Quan_cohomology_product_twisted-sector}).

\section{The case of genus $g=2$}\label{sec_234}

In this section we explicitly write down the quantum products for the case $M^{\PGL_2}=[M/\Gamma]$ when $g=2$. 

The moduli space $M=M_{2,L}(X)$ when $g(X)=2$ has dimension $3g-3=3$.  It is well-known that it is the intersection of two quadrics inside $\pp^5$, see \cite{Newstead}. 
Donaldson \cite{Donaldson} calculated the quantum cohomology of $M$ and we list his result.
The cohomology $H^*(M)$ is given by
$$H^*(M)=H^0(M)\oplus H^2(M)\oplus H^3(M)\oplus H^4(M)\oplus H^6(M)=\zz\oplus \zz\oplus \zz^2\oplus\zz\oplus \zz,$$
where $H^1(M)=H^5(M)=0$.  We use our notations before that 
$\alpha\in H^2(M)$, $\beta\in H^4(M)$, $\psi_1, \psi_2\in H^3(M)$, $\gamma=\psi_1\cdot\psi_2\in H^6(M)$ as generators. 
Note that in \cite{Donaldson},  Donaldson used the notations $h_2\in H^2(M)$, $h_4\in H^4(M)$, $\psi_1, \psi_2\in H^3(M)$, $h_6\in H^6(M)$ as generators.  We have 
$\alpha=h_2$, $\beta=-4h_4$ and $\gamma=4h_6$.
We have the following quantum products:
$$
\begin{cases}
\alpha\cdot_{\bbQ}\alpha=\beta+4\cdot 1_0 \bbQ;\\
\alpha\cdot_{\bbQ}\beta=\gamma+2\cdot  1_0 \alpha \bbQ;\\
\alpha\cdot_{\bbQ}\gamma=1_0 \beta \bbQ;\\
\alpha\cdot_{\bbQ}\psi_i=0.
\end{cases}
$$
Here we use $1_0$ to represent the identity class in $H^0(M)$. 
Using these quantum products it is not hard to see that the quantum cohomology $QH^*(M)$ of $M$ is generated by 
$\alpha$ and $\psi_1, \psi_2$, see \cite{Donaldson}, \cite[Example 24]{Munoz}.

For the moduli space $M^{\PGL_2}=[M/\Gamma]$ of stable $\PGL_2$-bundles,  we have $\Gamma=(\zz_2)^{4}$. 
So there are totally $7$ twisted sectors corresponding to the $7$ non-trivial elements in $\Gamma$. 
Let $\kappa\in\Gamma$ be a nontrivial element, the twisted sector $[M^{\kappa}/\Gamma]$ has dimension one. 
The Chen-Ruan cohomology $H^*_{I, \CR}(M^{\PGL_2})$ is generated by $\alpha,\beta,\gamma, 1_{\kappa}, 1_{\kappa}\alpha$ for all $0\neq \kappa\in \Gamma$.
Using the former results of the moduli spaces we calculate the quantum orbifold products
$$
\begin{cases}
\alpha\cdot_{\bbQ}\alpha=\beta+4\cdot 1_0 \bbQ;\\
\alpha\cdot_{\bbQ}\beta=\gamma+2\cdot  1_0 \alpha \bbQ;\\
\alpha\cdot_{\bbQ}\gamma=1_0 \beta \bbQ;\\
\alpha\cdot_{\bbQ}1_{\kappa}=1_{\kappa}\alpha+1_{\kappa}\bbQ;\\
\alpha\cdot_{\bbQ}1_{\kappa}\alpha=1_{\kappa}\alpha\bbQ.
\end{cases}
$$
Using the above quantum products it is not hard to see that the quantum orbifold cohomology $QH^*(M^{\PGL_2})$ is generated by $\alpha$ and $1_{\kappa}$ for all $0\neq \kappa\in\Gamma$.



\subsection*{}


\begin{thebibliography}{12}  

\bibitem{BLS} A. Beauville, Y. Laszlo, and C. Sorger, \newblock The Picard group of the moduli of $G$-bundles on a curve, {\em Compos. Math.} 112, No. 2, 183--216 (1998).

\bibitem{BN} K. Behrend, and B. Noohi, \newblock Uniformization of Deligne-Mumford curves, {\em J. Reine Angew. Math.} 599 (2006), 111--153.

 
\bibitem{Behrend} K. Behrend, \newblock Gromov-Witten invariants in algebraic geometry, {\em Invent. Math}. 127 (1997) 601--617.


\bibitem{BP} I. Biswas and M. Poddar, \newblock The Chen-Ruan cohomology of some moduli spaces, {\em Int. Math. Res. Not. IMRN} (2008), Art. ID rnn 041, 32 pp.  

\bibitem{CR} W. Chen and Y. Ruan, \newblock A new cohomology theory of orbifold, {\em Comm. Math. Phys.} 248 (2004),
1--31.



\bibitem{Donaldson} S. K. Donaldson, \newblock Floer homology and algebraic geometry, in: {\em Vector bundles in Algebraic Geometry (Durham, 1993)}, London Math. Soc. Lecture Note Ser. 208, Cambridge Univ. Press. Cambridge, 1995, 119--138. 

\bibitem{DN} J.-M.Drezet, and M. S. Narasimhan, \newblock Groupe de Picard des varietes de modules de fibres 
semi-stables sur les courbe algebriques, {\em Invent. Math.} 97 (1989), 53--94. 

\bibitem{Earl-Kirwan} R. Earl and F. Kirwan, \newblock Complete sets of relations in the cohomology rings of moduli spaces of holomorphic bundles and parabolic bundles over a Riemann surface, {\em Proc. London Math. Soc.} (3) 89 (2004), no. 3, 570--622.

\bibitem{GWZ} M. Groechenig, D. Wyss, and P. Ziegler, \newblock Mirror symmetry for moduli spaces of Higgs bundles via p-adic integration, {\em Invent. Math.}, 221, 505--596 (2020).

\bibitem{Harder-Narasimhan} G. Harder and M. S. Narasimhan, \newblock On the cohomology groups of moduli spaces of vector bundles on curves, {\em Math. Ann.} 212 (1974/75) 215--248. 

\bibitem{Hausel} T. Hausel, \newblock Global topology of the Hitchin system, in: {\em Handbook of moduli. Vol. II}, 29--69, Adv. Lect. Math. (ALM), 25, Int. Press, Somerville, MA, 2013. 

\bibitem{HT} T. Hausel and M. Thaddeus, \newblock Mirror symmetry, Langlands duality, and the Hitchin system, {\em Invent. Math.} Vol 153 No 1 (2003) 197--229.

  
\bibitem{Jiang_Adv} Y. Jiang, \newblock Counting twisted sheaves and S-duality, {\em Adv. Math.} 400 (2022), Paper No. 108332, arXiv:1909.04241.

\bibitem{JK} Y. Jiang, and M. Kool, \newblock Twisted sheaves and $\SU(r)/\zz_r$ Vafa-Witten theory, {\em Math. Ann.} 382 (2022), no. 1-2, 719--743, arXiv:2006.10368.



\bibitem{King-Newstead} A. D. King and P. E. Newstead, \newblock On the cohomology ring of the moduli space of rank $2$ vector bundles on a curve, {\em Topology}, 37 (1998), 407--418. 

\bibitem{Kirwan} F. Kirwan, \newblock The cohomology rings of moduli spaces of bundles over Riemann surfaces, {\em J. Amer. Math. Soc.} 5 (1992), no. 4, 853--906.


      
\bibitem{MS} D. Maulik, J. Shen, \newblock Endoscopic decomposition and the Hausel-Thaddeus conjecture, {\em Forum Math. Pi} 9 (2021), Paper No. e8, 49 pp., arXiv:2008.08520.    

\bibitem{MS_2009} N. Mok, X. Sun, \newblock Remarks on lines and minimal rational curves, {\em Sci. China Ser. A} 52 (2009), no. 4, 617--630. 

\bibitem{Munoz} V. Mu\~noz, \newblock Quantum cohomology of the moduli space of stable bundles over a Riemann surface, {\em Duke Math. Journal}, Vol. 98, No.3, (1999) 525--540.


\bibitem{NR75} M. S. Narasimhan, S. Ramanan, \newblock Generalised Prym Varieties as Fixed Points. {\em J. Indian Math. Soc. (N.S.)} 39 (1975), 1--19.


\bibitem{Nesterov} D. Nesterov, \newblock Enumerative mirror symmetry for moduli spaces of Higgs bundles and S-duality, arXiv:2302.08379.

\bibitem{Newstead} P. E. Newstead, \newblock Stable bundles of rank $2$ and odd degree over a curve of genus $2$, {\em Topology}, Vol. 7, 205--215 (1968). 

\bibitem{Ngo_IHES} B. C. Ng\^{o}, \newblock Le lemme fondamental pour les alg\`{e}bres de lie, {\em Publ. Math. IHES.} 111 (2010) 1--169. 
 
\bibitem{Qin} Z. Qin, \newblock Moduli of stable sheaves on ruled surfaces and their Picard groups, {\em J. Reine Angew. Math.}, 433 (1992), 201--219. 

\bibitem{TT1} Y. Tanaka, R. P. Thomas, \newblock Vafa-Witten invariants for projective surfaces I: stable case,  {\em J. Algebraic Geom.}, 29 (2020), 603--668, arXiv.1702.08487.

\end{thebibliography}
\end{document}